\theoremstyle{plain}
\newtheorem{theorem}{Theorem}[section]
\newtheorem{lemma}[theorem]{Lemma}
\newtheorem{corollary}[theorem]{Corollary}
\newtheorem{proposition}[theorem]{Proposition}
\theoremstyle{definition}
\newtheorem{definition}[theorem]{Definition}
\newtheorem{example}[theorem]{Example}
\theoremstyle{remark}
\newcommand{\coclosed}[2]{\ensuremath{\xymatrix@1{ #1 \, \ar@{^{(}->}[r]^-{cc} &
#2}}}
\newcommand{\cosmall}[3]{\ensuremath{\xymatrix@1{ #1 \,
\ar@{^{(}->}[r]^-{cs}_-{#2} & #3}}}
\author{Ay\c{s}e Tu\u{g}ba G\"{u}ro\u{g}lu \footnote{Celal Bayar University, Faculty of Arts and Sciences, Department of Mathematics, Muradiye, Manisa, Turkey.\newline e-mail : tugba.guroglu@cbu.edu.tr} \quad and \quad Elif Tu\u{g}\c{c}e Meri\c{c} \footnote{Celal Bayar University, Faculty of Arts and Sciences, Department of Mathematics, Muradiye, Manisa, Turkey.\newline e-mail : tugce.meric@cbu.edu.tr}}
\title{Principally Goldie*-Lifting Modules}
\date{}
\begin{document}
\maketitle
\renewcommand{\theenumi}{\arabic{enumi}}
\renewcommand{\labelenumi}{\emph{(\theenumi)}}

\textbf{ABSTRACT.} A module $M$ is called principally Goldie*-lifting if for every proper cyclic submodule $X$ of $M$, there is a direct summand $D$ of $M$ such that $X\beta^* D$. In this paper, we focus on principally Goldie*-lifting modules as generalizations of lifting modules. Various properties of these modules are given.

\vspace{1cm}
\begin{flushleft}
\textbf{Mathematics Subject Classification (2010)}:  16D10, 16D40, 16D70.\\
\end{flushleft}
\textbf{Keywords}: Principally supplemented, Principally lifting, Goldie*-lifting, Principally Goldie*-lifting .

\section{Introduction}
\quad Throughout this paper $R$ denotes an associative ring with identity and all modules are unital right $R$-modules. $Rad(M)$ will denote the Jacobson radical of $M$. Let $M$ be an $R$-module and $N,K$ be submodules of $M$. A submodule $K$ of a module $M$ is called \emph{small} (or superfluous) in $M$, denoted by $K\ll M$, if for every submodule $N$ of $M$ the equality $K+N=M$ implies $N=M$. $K$ is called a \emph{supplement} of $N$ in $M$ if $K$ is a minimal with respect to $N+K=M$, equivalently $K$ is a supplement (weak supplement) of $N$ in $M$ if and only if $K+N=M$ and $K\cap N\ll K$ ($K\cap N\ll M$). A module $M$ is called \emph{supplemented module} (\emph{weakly supplemented module}) if every submodule of $M$ has a supplement (weak supplement) in $M$. A module $M$ is $\oplus$-\emph{supplemented module} if every submodule of $M$ has a supplement which is a direct summand in $M$. \cite{Harmancý} defines principally supplemented modules and investigates their properties. A module $M$ is said to be \emph{principally supplemented} if for all cyclic submodule $X$ of $M$ there exists a submodule $N$ of $M$ such that $M=N+X$ with $N\cap X$ is small in $N$.  A module $M$ is said to be \emph{$\oplus$-principally supplemented} if for each cyclic submodule $X$ of $M$ there exists a direct summand $D$ of $M$ such that $M=D+X$ and $D\cap X\ll D$. A nonzero module $M$ is said to be \emph{hollow} if every proper submodule of $M$ is small in $M$. A nonzero module $M$ is said to be \emph{principally hollow} which means every proper cyclic submodule of $M$ is small in $M$. Clearly, hollow modules are principally hollow. Given submodules $K\subseteq N \subseteq M$ the inclusion $K\overset{cs}\hookrightarrow N$ is called \emph{cosmall }in $M$, denoted by $K\hookrightarrow N$, if $N/K \ll M/K$.\\
\newline \quad Lifting modules play an important role in module theory. Also their various generalizations are studied by many authors in \cite{Harmancý}, \cite{Býrkenmeýer}, \cite{Kamal}, \cite{Keskin}, \cite{Muller}, \cite{Talebý}, \cite{Yongduo} etc. A module $M$ is called \emph{lifting module} if for every submodule $N$ of $M$ there is a decomposition $M=D\oplus D'$ such that $D\subseteq N$ and $D'\cap N \ll M$. A module $M$ is called \emph{principally lifting module} if for all cyclic submodule $X$ of $M$ there exists a decomposition $M=D\oplus D'$ such that $D\subseteq X$ and $D'\cap X \ll M$. G.F.Birkenmeier et.al. \cite{Býrkenmeýer} defines $\beta^*$ relation to study on the open problem `Is every $H$-supplemented module supplemented?' in \cite{Muller}. They say submodules $X$, $Y$ of $M$ are $\beta^*$ equivalent, $X\beta^* Y$, if and only if $\dfrac{X+Y}{X}$ is small in $\dfrac{M}{X}$ and $\dfrac{X+Y}{Y}$ is small in $\dfrac{M}{Y}$.  $M$ is called \emph{Goldie*-lifting }(or briefly, \emph{$\mathcal{G}$*-lifting}) if and only if for each $X\leq M$ there exists a direct summand $D$ of $M$ such that $X\beta^*D$. $M$ is called \emph{Goldie*-supplemented} (or briefly, \emph{$\mathcal{G}$*-supplemented}) if and only if for each $X\leq M$ there exists a supplement submodule $S$ of $M$ such that $X\beta^*S$ (see \cite{Býrkenmeýer}).

In Section $2$, we recall the equivalence relation $\beta^*$ which is defined in  \cite{Býrkenmeýer} and investigate some basic properties of it.

In section $3$ we define principally Goldie*-lifting modules as a generalization of lifting modules. We give some neccesary assumptions for a quotient module or a direct summand of a principally Goldie*-lifting module to be principally Goldie*-lifting. Principally lifting, principally Goldie*-lifting and principally supplemented modules are compared. It is also shown that  principally lifting, principally Goldie*-lifting and $\oplus$-principally supplemented coincide on $\pi$-projective modules.

\section{Properties of $\beta^*$ Relation }
\begin{definition}(See \cite{Býrkenmeýer}) \label{rel}
Any submodules $X,Y$ of $M$ are $\beta^*$ equivalent, $X\beta^*Y$, if and only if $\dfrac{X+Y}{X}$ is small in $\dfrac{M}{X}$ and $\dfrac{X+Y}{Y}$ is small in $\dfrac{M}{Y}$.
\end{definition}

\begin{lemma}\normalfont{(See \cite{Býrkenmeýer})}
$\beta^*$ is an equivalence relation.
\end{lemma}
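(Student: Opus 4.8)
The plan is to verify the three axioms of an equivalence relation for $\beta^*$. Reflexivity and symmetry are immediate from Definition \ref{rel}: for reflexivity, $\frac{X+X}{X}=0$ is trivially small in $\frac{M}{X}$, so $X\beta^*X$; and the defining condition is visibly symmetric in $X$ and $Y$, so $X\beta^*Y$ gives $Y\beta^*X$. The entire content therefore lies in transitivity, and I will organize the argument around a single auxiliary characterization.

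The key tool I would isolate first is a description of $\beta^*$ purely in terms of which submodules ``complement'' $X$ up to all of $M$. Concretely, I claim
\begin{equation*}
X\beta^*Y \iff \big(\text{for every submodule } A\le M:\ M=X+A \Longleftrightarrow M=Y+A\big).
\end{equation*}
For the forward direction, assume $X\beta^*Y$ and $M=X+A$. Passing to $M/Y$ gives $\frac{X+Y}{Y}+\frac{A+Y}{Y}=\frac{M}{Y}$; since $\frac{X+Y}{Y}\ll\frac{M}{Y}$, this forces $A+Y=M$, and the reverse implication follows symmetrically. For the backward direction, suppose the complementation condition holds and take $X\le B\le M$ with $\frac{X+Y}{X}+\frac{B}{X}=\frac{M}{X}$; then $X+Y+B=M$, hence $Y+B=M$ (as $X\subseteq B$), hence $X+B=M$ by hypothesis, hence $B=M$. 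Thus $\frac{X+Y}{X}\ll\frac{M}{X}$, and the other smallness condition is obtained the same way, so $X\beta^*Y$.

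With this characterization in hand, transitivity is immediate: if $X\beta^*Y$ and $Y\beta^*Z$, then for every submodule $A\le M$ we have $M=X+A \Leftrightarrow M=Y+A \Leftrightarrow M=Z+A$, whence $X\beta^*Z$.

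I expect the only real obstacle to be the bookkeeping inside the proof of the characterization: the smallness hypotheses $\frac{X+Y}{X}\ll\frac{M}{X}$ and $\frac{X+Y}{Y}\ll\frac{M}{Y}$ only ``see'' submodules of $M$ that contain $X$ (respectively $Y$), so at each step one must remember to enlarge the test submodule by $X$ or $Y$ before invoking smallness, and then use the containment to strip it back off. An alternative route, avoiding the lemma, is to prove transitivity directly in the same spirit: given $X\le L\le M$ with $X+Z+L=M$, first deduce $Z+L=M$, then apply $\frac{Y+Z}{Y}\ll\frac{M}{Y}$ to $N=Y+L$ to get $Y+L=M$, then apply $\frac{X+Y}{X}\ll\frac{M}{X}$ to $L$ to conclude $L=M$, giving $\frac{X+Z}{X}\ll\frac{M}{X}$, and symmetrically $\frac{X+Z}{Z}\ll\frac{M}{Z}$. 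Routing the argument through the characterization is cleaner, and that characterization is likely to be reused in later sections.
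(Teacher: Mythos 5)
Your proof is correct. The paper itself gives no argument for this lemma (it is quoted from Birkenmeier et al.), but the auxiliary characterization you isolate — $X\beta^*Y$ iff $M=X+A$ and $M=Y+A$ hold for exactly the same submodules $A$ — is precisely condition (d) of Theorem \ref{thm1}, which is the standard route to transitivity in the cited source; both your derivation of that characterization and the direct transitivity argument you sketch as an alternative are sound.
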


By [\cite{Býrkenmeýer}, page $43$], the zero submodule is $\beta^*$ equivalent to any small submodule.

\begin{theorem}\normalfont{(See \cite{Býrkenmeýer})} \label{thm1}
Let $X,Y$ be submodules of $M$. The following are equivalent:
\begin{itemize}
  \item[(a)] $X\beta^*Y$.
  \item[(b)] $X\overset{cs}\hookrightarrow X+Y$  and $Y\overset{cs}\hookrightarrow X+Y$.
  \item[(c)] For each submodule $A$ of $M$ such that $X+Y+A=M$, then $X+A=M$ and $Y+A=M$.
  \item[(d)] If $K+X=M$ for any submodule $K$ of $M$, then $Y+K=M$ and if $Y+H=M$ for any submodule $H$ of $M$, then $X+H=M$.
\end{itemize}
\end{theorem}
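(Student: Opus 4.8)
The plan is to deduce (a) $\Leftrightarrow$ (b) straight from the definition of a cosmall inclusion, and then to close the cycle (a) $\Rightarrow$ (c) $\Rightarrow$ (d) $\Rightarrow$ (a). The single technical point underlying everything is that the submodules of $M/X$ are exactly those of the form $N/X$ with $X\subseteq N\le M$ (and similarly for $M/Y$), so a smallness statement in $M/X$ is really a statement about submodules of $M$ that contain $X$. For (a) $\Leftrightarrow$ (b): by the definition of cosmall inclusion recalled in the introduction, $X\overset{cs}\hookrightarrow X+Y$ means precisely $(X+Y)/X\ll M/X$, and $Y\overset{cs}\hookrightarrow X+Y$ means precisely $(X+Y)/Y\ll M/Y$; comparing with Definition \ref{rel}, the conjunction of these two is exactly $X\beta^*Y$, so this equivalence is a mere translation of notation.

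For (a) $\Rightarrow$ (c): if $X\beta^*Y$ and $X+Y+A=M$, then in $M/X$ we have $(X+Y)/X+(A+X)/X=M/X$, and since $(X+Y)/X\ll M/X$ this forces $(A+X)/X=M/X$, i.e. $X+A=M$; the symmetric computation in $M/Y$ gives $Y+A=M$. For (c) $\Rightarrow$ (d): if $K+X=M$ then a fortiori $X+Y+K=M$, so (c) yields $Y+K=M$; dually, if $Y+H=M$ then $X+Y+H=M$, so (c) yields $X+H=M$.

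For (d) $\Rightarrow$ (a): to get $(X+Y)/X\ll M/X$, take a submodule $N/X$ of $M/X$ with $(X+Y)/X+N/X=M/X$; then $X+Y+N=M$, and since $X\subseteq N$ this reads $Y+N=M$, whence the second half of (d) (with $H=N$) gives $X+N=M$, i.e. $N/X=M/X$. Symmetrically, if $(X+Y)/Y+N'/Y=M/Y$ with $Y\subseteq N'$, then $X+N'=M$, and the first half of (d) (with $K=N'$) gives $Y+N'=M$, i.e. $N'/Y=M/Y$. Hence $X\beta^*Y$, completing the cycle.

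No step here is genuinely hard; the proof is entirely a matter of passing between $M$ and the quotients $M/X$ and $M/Y$ while keeping track of which inclusions are available. The only place to be careful is in (d) $\Rightarrow$ (a), where one must invoke the correct half of (d) in each case (the hypothesis $Y+N=M$ calls for the implication $Y+H=M\Rightarrow X+H=M$, while $X+N'=M$ calls for $K+X=M\Rightarrow Y+K=M$); routing the cycle through (a) also spares us the minor manoeuvre one would otherwise need for a direct proof of (d) $\Rightarrow$ (c), namely feeding $K=Y+A$ and $H=X+A$ into (d) and simplifying $Y+(Y+A)=Y+A$ and $X+(X+A)=X+A$.
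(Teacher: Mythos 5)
Your proof is correct and complete: the translation (a) $\Leftrightarrow$ (b) is indeed just the definition of cosmall inclusion, the implications (a) $\Rightarrow$ (c) $\Rightarrow$ (d) are the standard smallness and absorption arguments, and in (d) $\Rightarrow$ (a) you correctly use the inclusions $X\subseteq N$ and $Y\subseteq N'$ to reduce $X+Y+N=M$ to $Y+N=M$ before invoking the appropriate half of (d). Note that the paper itself gives no proof of this theorem --- it is quoted from the reference of Birkenmeier et al. --- so there is no in-paper argument to compare against; your argument stands on its own and matches the usual proof of this equivalence in the literature.
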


\begin{lemma} \label{lemma1}
Let $M=D\oplus D'$ and $A,B\leq D$. Then $A\beta$*$B$ in $M$ if and only if  $A\beta$*$B$ in $D$.
\end{lemma}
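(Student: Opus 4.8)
The plan is to reduce the statement to the standard fact that a submodule lying inside a direct summand is small in the whole module precisely when it is small in that summand, applied to the quotients $M/A$ and $M/B$.

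First I would record the relevant decompositions of these quotients. Since $M=D\oplus D'$ and $A\leq D$, we have $D/A+(D'+A)/A=(D+D'+A)/A=M/A$, and by the modular law $D\cap(D'+A)=A+(D\cap D')=A$, so $M/A=D/A\oplus(D'+A)/A$; in particular $D/A$ is a direct summand of $M/A$. The identical argument gives $M/B=D/B\oplus(D'+B)/B$, so $D/B$ is a direct summand of $M/B$.

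Next I would invoke (or prove in one line) the auxiliary fact: if $K\leq L\leq N$ with $L$ a direct summand of $N$, then $K\ll N$ if and only if $K\ll L$. The forward direction is automatic, since a submodule small in $N$ is small in every intermediate submodule. For the converse, write $N=L\oplus L'$ and suppose $K+X=N$; then $L=L\cap(K+X)=K+(L\cap X)$ by modularity (using $K\leq L$), so $L\cap X=L$ because $K\ll L$, whence $L\leq X$ and $N=K+X\leq X$, i.e. $X=N$.

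Finally I would apply this twice. Because $A+B\leq D$, the submodule $(A+B)/A$ is contained in the direct summand $D/A$ of $M/A$, so $(A+B)/A\ll M/A$ iff $(A+B)/A\ll D/A$; similarly $(A+B)/B\ll M/B$ iff $(A+B)/B\ll D/B$. Therefore $A\beta^*B$ holds in $M$ iff both of these smallness conditions hold with respect to $M$, iff both hold with respect to $D$, iff $A\beta^*B$ holds in $D$. I do not expect a real obstacle here: the only point requiring care is checking that the quotient decompositions $M/A=D/A\oplus(D'+A)/A$ (and with $B$) are genuinely direct, which is the modular-law computation above; everything else is the summand-smallness lemma.
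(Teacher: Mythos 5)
Your overall strategy---pass to the quotients $M/A$ and $M/B$, observe via the modular law that $D/A$ and $D/B$ are direct summands there, and reduce everything to a ``summand--smallness'' lemma---is sound, and the computations $M/A=D/A\oplus(D'+A)/A$ and $M/B=D/B\oplus(D'+B)/B$ are correct. But there is a genuine error inside your auxiliary lemma: you have the two directions backwards. The claim that ``a submodule small in $N$ is small in every intermediate submodule'' is false; for instance a nonzero small submodule $K$ of $N$ is never small in $L=K$ itself, since $K+0=L$ would force $0=L$ (take $K=2\Z/4\Z\leq\Z/4\Z$). The implication $K\ll N\Rightarrow K\ll L$ is precisely the direction that \emph{requires} $L$ to be a direct summand of $N$ (it is [\cite{Wisbauer}, $19.3(5)$], which the paper invokes elsewhere), and it is exactly the direction you need to get from ``$A\beta^*B$ in $M$'' to ``$A\beta^*B$ in $D$''. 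Meanwhile the direction you do prove with the modular law, $K\ll L\Rightarrow K\ll N$, is the one valid for arbitrary $K\leq L\leq N$---notice your own argument never uses the decomposition $N=L\oplus L'$ you wrote down. So as it stands, the harder half of the lemma rests only on a false general principle.

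The gap is easy to close with the decomposition you already have: if $N=L\oplus L'$, $K\leq L$, $K\ll N$, and $K+Y=L$ for some $Y\leq L$, then $K+Y+L'=N$, hence $Y+L'=N$ because $K\ll N$, and then $L=L\cap(Y+L')=Y+(L\cap L')=Y$ by modularity, so $K\ll L$. With that correction your proof is complete, and it is essentially the paper's argument rephrased in the quotient modules: the paper's forward direction avoids quotients by using the characterization that $A\beta^*B$ iff every $N$ with $A+B+N=M$ satisfies $A+N=M=B+N$, together with the same modular-law step (from $A+N+D'=M$ with $A,N\leq D$ deduce $A+N=D$), and its backward direction is exactly your easy smallness implication $K\ll L\Rightarrow K\ll N$.
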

\begin{proof}
$(\Rightarrow)$ Let $A\beta$*$B$ in $M$ and $A+B+N=D$ for some submodule $N$ of $D$. Let us show $A+N=D$ and $B+N=D$. Since $A\beta$*$B$ in $M$, $$M=D\oplus D'=A+B+N+D'$$ implies $A+N+D'=M$ and $B+N+D'=M$. By [\cite{Wisbauer}, $41$], $A+N=D$ and $B+N=D$. From Theorem \ref{thm1}, we get $A\beta$*$B$ in $D$.\\
$(\Leftarrow)$ Let $A\beta$*$B$ in $D$. Then $\dfrac{A+B}{A}\ll \dfrac{D}{A}$ implies $\dfrac{A+B}{A}\ll \dfrac{M}{A}$. Similarly, $\dfrac{A+B}{B}\ll \dfrac{D}{B}$ implies $\dfrac{A+B}{B}\ll \dfrac{M}{B}$. This means that $A\beta$*$B$ in $M$.
\end{proof}

\begin{lemma} \label{lemma2}
If a direct summand $D$ in $M$ is $\beta$* equivalent to a cyclic submodule $X$ of $M$, then $D$ is also cyclic.
\end{lemma}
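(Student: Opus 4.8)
The plan is to exploit Theorem~\ref{thm1}, which tells us that $D\beta^*X$ is equivalent to $D/(D\cap\ker)\dots$ — more precisely, that $D \overset{cs}\hookrightarrow D+X$ and $X\overset{cs}\hookrightarrow D+X$, i.e. $(D+X)/D \ll M/D$ and $(D+X)/X \ll M/X$. The key observation is that $D$ is a direct summand, so writing $M = D\oplus D'$, we have $M/D \cong D'$ and the projection $\pi : M \to D'$ along $D$ carries the smallness of $(D+X)/D$ in $M/D$ to smallness of $\pi(X) = \pi(D+X)$ in $D'$. Since $X$ is cyclic, say $X = xR$, the image $\pi(X) = \pi(x)R$ is a cyclic submodule of $D'$; and a cyclic module that is small in $D'$ is small in $M$ as well (small submodules are preserved under the inclusion $D' \hookrightarrow M$ since $D'$ is a direct summand).

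First I would fix the decomposition $M = D \oplus D'$ and the cyclic generator $X = xR$. Next I would use $D\beta^*X$ together with Theorem~\ref{thm1}(c) or the smallness formulation to deduce that $\pi(X)$ is small in $D'$ — concretely, from $(D+X)/D \ll M/D$ and the isomorphism $M/D \cong D'$ sending $(d+x'+D) \mapsto x'$, the image of $(D+X)/D$ is exactly $\pi(X)$, which is therefore small in $D'$ and hence small in $M$. Then I would show that $D + X = D \oplus \pi(X)$: indeed every element of $X$ has the form $d + \pi(x')$ with $d\in D$, so $D + X \subseteq D + \pi(X)$, and conversely $\pi(X) \subseteq D + X$ since $\pi(x') = x' - (\text{its }D\text{-component})$ and $x' \in X$ gives $\pi(x') \in D + X$. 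Thus $D + X = D \oplus \pi(X)$, and since $\pi(X) = \pi(x)R$ is cyclic, $D + X$ is generated by $D$ together with one element.

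From here the conclusion that $D$ itself is cyclic requires the other half of $\beta^*$: we also have $(D+X)/X \ll M/X$, equivalently (again by Theorem~\ref{thm1}) that for any $K \le M$ with $K + D = M$ we get $K + X = M$. I would combine this with $D + X = D \oplus \pi(X)$ as follows: write $e = \pi(x)$, so $D + X = D \oplus eR$ and $M = D \oplus D' = D \oplus eR \oplus C$ for a suitable complement $C \le D'$ of $eR$ in $D'$. Then $D' = eR \oplus C$, and the generator of $D$ can be produced by pushing the generator(s) implicit in the relation $X \beta^* D$ back across $\pi$. The cleanest route: since $X \overset{cs}\hookrightarrow D+X$ means $(D+X)/X \ll M/X$, and $D + X = D \oplus eR$ is a direct summand of $M$ (being $D$ plus a summand of $D'$), one shows $D$ is a direct summand of $D + X$ with cyclic complement $eR$; then $D \cong (D+X)/eR$, and $(D+X)/eR$ is generated by the image of $x$ (since $X$ maps onto $(D+X)/D$ which, modulo $eR$-adjustments, generates).

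The main obstacle I anticipate is the last transfer — concluding that $D$ is cyclic rather than merely that $D+X$ is ``$D$ plus one element.'' The technical point is that $D$ being a direct summand of the cyclically-generated module $D \oplus eR$ forces $D$ to be cyclic: if $D \oplus eR = zR$ is cyclic then $D$, as a homomorphic image (via the projection killing $eR$) of the cyclic module $zR$, is cyclic. So the real work is establishing that $D + X$ is cyclic, and for that I would argue: $(D+X)/D$ is cyclic (generated by $\pi(x) + D$) and small in $M/D$, while simultaneously $(D+X)/X \ll M/X$ forces $D$ to have a generator lying ``close to'' that of $X$; making this precise, one uses that $X\beta^*D$ is symmetric to replace $D$ by $X$ in the role of the cosmall-larger module, giving $D \overset{cs}\hookrightarrow D + X$, hence $D$ and $X = xR$ differ by a small-in-$M$ piece, and a module cosmall-equivalent to a cyclic module inside a fixed ambient is itself — here because $D$ is a summand — cyclic. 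I would present this last step carefully since it is where the direct-summand hypothesis is essential and where a naive argument would fail.
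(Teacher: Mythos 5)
There is a genuine gap, and it lies exactly where you flagged it: the reduction to ``$D+X$ is cyclic'' is not just hard to complete, it is a dead end, because $D+X$ need not be cyclic even when the lemma is true. Concretely, in $M=\Z/4\Z\oplus\Z/4\Z$ take $D=\Z/4\Z\oplus 0$ and $X=(1,2)\Z$; one checks that $(D+X)/D$ is the unique order-$2$ subgroup of $M/D\cong\Z/4\Z$ and $(D+X)/X$ is the unique order-$2$ subgroup of $M/X\cong\Z/4\Z$, so $X\beta^*D$, yet $D+X=\Z/4\Z\oplus 2\Z/4\Z\cong\Z/4\Z\oplus\Z/2\Z$ is not cyclic. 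Your closing argument (``a module cosmall-equivalent to a cyclic module \dots is itself cyclic'') is moreover circular: it restates the lemma rather than proving it. A second, smaller misstep is that you project onto $D'$ along $D$; that projection only recovers $(D+X)/D\cong\pi(X)$, i.e.\ information about the wrong factor.

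The missing idea is to apply Theorem~\ref{thm1}(c) (or (d)) to the equality $D+D'=M$: since $X\beta^*D$, this immediately gives $M=X+D'$. Then
$$D\cong \dfrac{M}{D'}=\dfrac{X+D'}{D'}\cong \dfrac{X}{X\cap D'},$$
and a quotient of the cyclic module $X$ is cyclic, so $D$ is cyclic. This is the paper's entire proof. Equivalently, in your projection language: use the projection $\rho:M\to D$ along $D'$ (not $\pi:M\to D'$); from $M=X+D'$ one gets $D=\rho(M)=\rho(X+D')=\rho(X)$, which is generated by $\rho(x)$. The direct-summand hypothesis enters only through the existence of $D'$ and the isomorphism $M/D'\cong D$; no smallness argument is needed beyond the single application of Theorem~\ref{thm1}.
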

\begin{proof}
Assume that $M=D\oplus D'$ for some submodules $D,D'$ of $M$ and  $X$ is a cyclic submodule of $M$ which is $\beta$* equivalent to $D$. By Theorem \ref{thm1},  $M=X+D'$. Since $\dfrac{X+D'}{D'}=\dfrac{M}{D'}\cong D$, $D$ is cyclic.
\end{proof}

\section{Principally Goldie* - Lifting Modules}
\indent \indent In \cite{Býrkenmeýer}, the authors defined $\beta^*$ relation and they introduced two notions called Goldie*-supplemented module and Goldie*-lifting module depend on $\beta^*$ relation. $M$ is called \emph{Goldie*-lifting} (or briefly, $\mathcal{G}$*-lifting) if and only if for each $N\leq M$ there exists a direct summand $D$ of $M$ such that $N\beta^*D$. $M$ is called \emph{Goldie*-supplemented} (or briefly, $\mathcal{G}$*-supplemented) if and only if for each $N\leq M$ there exists a supplement submodule $S$ of $M$ such that $N\beta^*S$. A module $M$ is said to be \emph{$H$-supplemented }if for every submodule $N$ there is a direct summand $D$ of $M$ such that $M=N+B$ holds if and only if $M=D+B$ for any submodule $B$ of $M$. They showed that Goldie*-lifting modules and $H$-supplemented modules are the same in [\cite{Býrkenmeýer}, Theorem $3.6$]. In this section, we define principally Goldie*-lifting module (briefly principally $\mathcal{G}$*-lifting module) as a generalization of $\mathcal{G}$*-lifting module and investigate some properties of this module.

\begin{definition}
A module $M$ is called \emph{principally Goldie*-lifting module} (briefly principally $\mathcal{G}$*-lifting) if for each cyclic submodule $X$ of $M$, there exists a direct summand $D$ of $M$ such that $X\beta^*D$.
\end{definition}

Clearly, every $\mathcal{G}$*-lifting module is principally $\mathcal{G}$*-lifting. However the converse does not hold.

\begin{example}
Consider the $\mathbb{Z}$-module $\mathbb{Q}$. Since $Rad (\mathbb{Q})= \mathbb{Q}$, every cyclic submodule of $\mathbb{Q}$ is small in $\mathbb{Q}$. By [\cite{Býrkenmeýer}, Example $2.15$], the $\mathbb{Z}$-module $\mathbb{Q}$ is principally $\mathcal{G}$*-lifting. But the $\mathbb{Z}$-module $\mathbb{Q}$ is not supplemented. So it is not $\mathcal{G}$*-lifting by [\cite{Býrkenmeýer}, Theorem $3.6$].
\end{example}

A module $M$ is said to be \emph{radical} if $Rad(M)=M$.

\begin{lemma}
Every radical module is principally $\mathcal{G}$*-lifting.
\end{lemma}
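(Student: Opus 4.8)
The plan is to show that if $Rad(M) = M$, then every proper cyclic submodule $X$ of $M$ satisfies $X \beta^* 0$, so the zero submodule — which is always a direct summand — works as the required $D$.

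First I would recall from [\cite{B�rkenme�er}, page $43$] (cited in the excerpt) that the zero submodule is $\beta^*$ equivalent to any small submodule of $M$. So it suffices to prove that every proper cyclic submodule $X$ of $M$ is small in $M$, i.e. that a radical module is principally hollow. Let me write the plan for that: take a cyclic submodule $X = xR$ and suppose $X + N = M$ for some submodule $N$. I want to conclude $N = M$. The key fact is that $Rad(M) = M$ implies $X \subseteq Rad(M)$, and $Rad(M)$ is the sum of all small submodules of $M$; however, a cleaner route is to use that for a finitely generated (in particular cyclic) submodule $X$, $X \subseteq Rad(M)$ forces $X \ll M$. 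Concretely, if $X + N = M$ with $X = x_1 R + \dots + x_k R$ (here $k = 1$), then since each $x_i \in Rad(M)$, one peels off generators one at a time: $x_1 \in Rad(M)$ means $x_1 R \ll M$, so $x_1 R + N = M$ gives $N = M$ — actually I only need the one-generator case, so this is immediate once I know $xR \ll M$ whenever $x \in Rad(M)$.

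So the crux reduces to the standard lemma: if $x \in Rad(M)$ then $xR \ll M$. I would prove this by noting that $Rad(M) = \bigcap \{ L \le M : L \text{ maximal} \}$, or rather use the characterization via small submodules; the quickest argument is that if $xR + N = M$ but $N \ne M$, then by Zorn's lemma $N$ is contained in a submodule $N'$ maximal with respect to $x \notin N'$ — but this needs care since $M$ need not have maximal submodules. The safer classical argument: $xR + N = M$, and if $N \ne M$ then $M/N$ is a nonzero cyclic module, hence has a maximal submodule $P/N$, and since $x \in Rad(M)$ we have $x + N \in Rad(M/N) \subseteq P/N$, giving $xR + N \subseteq P \subsetneq M$, a contradiction. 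Here I use that a nonzero cyclic module always has a maximal submodule, and that $Rad$ behaves well under quotients: $(Rad(M) + N)/N \subseteq Rad(M/N)$.

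Putting it together: given a proper cyclic $X = xR$ of the radical module $M$, we have $x \in M = Rad(M)$, so by the above $X \ll M$; hence $X \beta^* 0$ by [\cite{B�rkenme�er}, page $43$], and since $0$ is a direct summand of $M$, the module $M$ is principally $\mathcal{G}$*-lifting. The main obstacle is the quotient-module argument establishing $xR \ll M$ for $x \in Rad(M)$ — in particular making sure one does not accidentally assume $M$ itself has a maximal submodule (which can fail for radical modules); the fix is to pass to the cyclic quotient $M/N$, which does have one as long as it is nonzero. This is entirely standard, so no lengthy computation is needed.
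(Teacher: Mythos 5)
Your proposal is correct and follows essentially the same route as the paper: reduce to showing $mR\subseteq\Rad(M)$ implies $mR\ll M$, then conclude $mR\,\beta^*\,0$ with the direct summand $0$. The only difference is that the paper simply cites [Wisbauer, 21.5] for the smallness of finitely generated submodules of the radical, whereas you supply a self-contained (and correct) proof of that fact via the cyclic quotient $M/N$ and its maximal submodule.
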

\begin{proof}
Let $m\in M$. If $M$ is radical, $mR \subseteq Rad(M)$. By [\cite{Wisbauer}, $21.5$], $mR\ll M$. So we get $mR \beta^*0$. Thus $M$ is principally $\mathcal{G}$*-lifting.
\end{proof}


\begin{theorem}\label{thm3}
Let $M$ be a module. Consider the following conditions:
\begin{itemize}
  \item[(a)] $M$ is principally lifting,
  \item[(b)] $M$ is principally $\mathcal{G}$*-lifting,
  \item[(c)] $M$ is principally supplemented.
\end{itemize}
Then $(a)\Rightarrow (b)\Rightarrow (c)$.
\end{theorem}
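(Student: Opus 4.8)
The plan is to prove the two implications separately.

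\medskip

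\noindent\textbf{Plan for $(a)\Rightarrow(b)$.} Suppose $M$ is principally lifting and let $X=xR$ be a cyclic submodule of $M$. By definition there is a decomposition $M=D\oplus D'$ with $D\subseteq X$ and $D'\cap X\ll M$. I would show $X\beta^*D$ by verifying condition (c) of Theorem \ref{thm1}: if $A\leq M$ satisfies $X+D+A=M$, then since $D\subseteq X$ this reads $X+A=M$, so one half is immediate. For the other half, I want $D+A=M$. Write an arbitrary element of $M=X+A$ and use the modularity of the lattice of submodules together with $M=D\oplus D'$: from $X=D\oplus(D'\cap X)$ (which holds because $D\subseteq X$ and $D'\cap X\subseteq X$, by modularity $X=D\oplus(D'\cap X)$) we get $M=X+A=D+(D'\cap X)+A$. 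Since $D'\cap X\ll M$, this forces $D+A=M$. Hence $X\beta^*D$ with $D$ a direct summand, so $M$ is principally $\mathcal{G}$*-lifting.

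\medskip

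\noindent\textbf{Plan for $(b)\Rightarrow(c)$.} Suppose $M$ is principally $\mathcal{G}$*-lifting and let $X=xR$ be a cyclic submodule; I must produce a submodule $N$ with $M=N+X$ and $N\cap X\ll N$. By hypothesis there is a direct summand $D$, say $M=D\oplus D'$, with $X\beta^*D$. The natural candidate is $N=D'$. First, $M=D'+X$: since $D\subseteq D+X$ and $\frac{X+D}{D}\ll\frac{M}{D}$ from $X\beta^*D$, and $\frac{M}{D}\cong D'$, I want to deduce $X+D=M$ — actually from Theorem \ref{thm1}(c) applied with $A=D'$: we have $X+D+D'=M$ trivially, hence $X+D'=M$. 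So $M=N+X$ holds. Second, I need $N\cap X=D'\cap X\ll N=D'$. For this I would use the other half of $\beta^*$, namely $\frac{X+D}{X}\ll\frac{M}{X}$, or rather work with supplements directly: since $M=D\oplus D'$ and $M=D'+X$, by modularity $X=(X\cap D')\oplus(\text{something})$... the cleanest route is to show $D'\cap X\ll M$ and then, since $D'$ is a direct summand of $M$, conclude $D'\cap X\ll D'$. To get $D'\cap X\ll M$: suppose $(D'\cap X)+A=M$ for some $A\leq M$; then $X+A\supseteq (D'\cap X)+A$ so... this needs more care, so instead I would invoke that $X\beta^*D$ implies $X\overset{cs}\hookrightarrow X+D$ and $D\overset{cs}\hookrightarrow X+D$ from Theorem \ref{thm1}(b), and combine with $M=D\oplus D'=X+D'$ to locate a supplement of $X$ inside $D'$.

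\medskip

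\noindent\textbf{Main obstacle.} The genuinely delicate step is the second part of $(b)\Rightarrow(c)$: extracting, from the abstract relation $X\beta^*D$ together with $M=D\oplus D'$, a concrete submodule of $M$ that is a \emph{supplement} of $X$ (smallness of the intersection inside the submodule, not just inside $M$). I expect that $D'$ itself need not be a supplement of $X$ in general; more likely one first finds a weak supplement and then, using that $D'$ is a direct summand and the characterization of $\beta^*$ via cosmall inclusions in Theorem \ref{thm1}, upgrades it. An alternative, possibly smoother, argument for $(b)\Rightarrow(c)$ is to note that $X\beta^*D$ with $D$ a summand gives $M=X+D'$ and $\frac{X\cap(D\oplus(D'\cap X))}{\,\cdot\,}$... — concretely, set $S=D'\cap(X+D)$ or use that by modularity $X\cap D'$ is small in $X+D$ modulo suitable quotients. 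I would spend most of the write-up making this intersection-smallness step rigorous, and treat $M=N+X$ and the whole of $(a)\Rightarrow(b)$ as routine modular-lattice manipulations.
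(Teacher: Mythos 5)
Your argument for $(a)\Rightarrow(b)$ is correct and complete: verifying condition (c) of Theorem \ref{thm1} via the decomposition $X=D\oplus(D'\cap X)$ and the smallness of $D'\cap X$ in $M$ is a clean route. The paper instead checks the two quotient conditions of Definition \ref{rel} directly (using $mR/D\cong mR\cap D'\ll D'\cong M/D$); the two arguments are essentially equivalent in substance, so this half is fine.

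The problem is $(b)\Rightarrow(c)$, where you correctly obtain $M=X+D'$ but then explicitly leave the smallness step open; as written this is a genuine gap. Moreover, your worry that $D'$ ``need not be a supplement of $X$ in general'' is unfounded: $D'$ \emph{is} a supplement of $X$, and this is exactly what the paper invokes, namely [Birkenmeier et al., Theorem $2.6(ii)$], which says that if $X\beta^*Y$ and $S$ is a supplement of $Y$, then $S$ is a supplement of $X$ (here $Y=D$ and $S=D'$, a supplement of $D$ since $M=D\oplus D'$). The direct argument you were circling around is short: let $T\leq D'$ with $(X\cap D')+T=D'$. Then $M=X+D'=X+(X\cap D')+T=X+T$, so Theorem \ref{thm1}(d) applied to $X\beta^*D$ gives $D+T=M$, and by modularity $D'=D'\cap(D+T)=(D'\cap D)+T=T$ because $D\cap D'=0$. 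Hence $X\cap D'\ll D'$, so $D'$ is a supplement of $X$ and (c) follows. Without this step (or the citation replacing it), your write-up does not yet prove the second implication.
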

\begin{proof}
$(a)\Rightarrow (b)$ Let $m\in M$. Then $mR$ is cyclic submodule of $M$. From $(a)$, there is a decomposition $M=D\oplus D'$ with $D\leq mR$ and $mR \cap D' \ll M$. Since $D\leq mR$, $\dfrac{mR+D}{mR}\ll \dfrac{M}{mR}$. By modularity, $mR=D\oplus (mR\cap D')$. Then $\dfrac{mR}{D}\cong mR\cap D'$ and $\dfrac{M}{D}\cong D'$. If $mR \cap D' \ll M$, by [\cite{Wisbauer}, $19.3$], $mR \cap D' \ll D'$. It implies that $\dfrac{mR+D}{D}\ll \dfrac{M}{D}$. Therefore it is seen that $mR\beta$*$D$ from Definition \ref{rel}. Hence $M$ is principally $\mathcal{G}$*-lifting.\\
$(b)\Rightarrow (c)$ Let $m \in M$. By hypothesis, there exists a direct summand $D$ of $M$ such that $mR\beta$* $D$. Since $M=D\oplus D'$ for some submodule $D'$ of $M$ and $D'$ is a supplement of $D$, $D'$ is a supplement of $mR$ in $M$ by [\cite{Býrkenmeýer}, Theorem $2.6 (ii)$]. Thus $M$ is principally supplemented.
\end{proof}

\quad The following example shows that a principally $\mathcal{G}$*-lifting module need not be principally lifting in general:

\begin{example}
Consider the $\mathbb{Z}$-module $M=\mathbb{Z} / 2\mathbb{Z} \oplus \mathbb{Z}/ 8\mathbb{Z}$. By [\cite{Yongduo}, Example $3.7$], $M$ is $H$-supplemented module. Then $M$ is $\mathcal{G}$*-lifting by [\cite{Býrkenmeýer}, Theorem $3.6$]. Since every $\mathcal{G}$*-lifting module is principally $\mathcal{G}$*-lifting, $M$ is principally $\mathcal{G}$*-lifting. But from [\cite{Harmancý}, Examples $7.(3)$], $M$ is not principally lifting. \\
\end{example}

\begin{theorem}\label{ind}
Let $M$ be an indecomposable module. Consider the following conditions:
\begin{itemize}
  \item[(a)] $M$ is principally lifting,
  \item[(b)] $M$ is principally hollow,
  \item[(c)] $M$ is principally $\mathcal{G}$*-lifting.
\end{itemize}
Then $(a)\Leftrightarrow (b)\Leftrightarrow (c)$
\end{theorem}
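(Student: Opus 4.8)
The plan is to lean entirely on the defining feature of an indecomposable module, namely that its only direct summands are $0$ and $M$, together with two elementary remarks about $\beta^*$: first, $X\beta^*0$ holds precisely when $X\ll M$ (the zero submodule is $\beta^*$-equivalent exactly to the small submodules); second, $X\beta^*M$ forces $X=M$, since $\frac{X+M}{X}=\frac{M}{X}$ is small in $\frac{M}{X}$ only when $\frac{M}{X}=0$. With these in hand all three conditions reduce to the single assertion ``every proper cyclic submodule of $M$ is small in $M$'', and I would establish the cycle $(a)\Rightarrow(c)\Rightarrow(b)\Rightarrow(a)$.

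The implication $(a)\Rightarrow(c)$ needs no new work: it is the special case of $(a)\Rightarrow(b)$ in Theorem \ref{thm3}, which already gives that every principally lifting module is principally $\mathcal{G}$*-lifting, with no indecomposability hypothesis.

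For $(c)\Rightarrow(b)$, take a proper cyclic submodule $X$ of $M$. By hypothesis there is a direct summand $D$ of $M$ with $X\beta^*D$; indecomposability gives $D=0$ or $D=M$. The case $D=M$ is excluded because $X\beta^*M$ would force $X=M$, contradicting properness. Hence $D=0$, and $X\beta^*0$ means $X\ll M$. Thus every proper cyclic submodule is small, i.e. $M$ is principally hollow. For $(b)\Rightarrow(a)$, let $X$ be a cyclic submodule of $M$; if $X=M$ then $M=M\oplus 0$ is a witnessing decomposition, and if $X$ is proper then $X\ll M$ by hypothesis, so $M=0\oplus M$ with $0\subseteq X$ and $M\cap X=X\ll M$ works. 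Hence $M$ is principally lifting and the cycle closes, yielding $(a)\Leftrightarrow(b)\Leftrightarrow(c)$.

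The argument is essentially routine bookkeeping; the only step calling for any care is recognizing that $X\beta^*M$ is \emph{not} automatic but rather characterizes $X=M$, since overlooking this would falsely collapse $(c)$ to a triviality. I anticipate no genuinely hard obstacle beyond getting these small $\beta^*$-facts stated cleanly.
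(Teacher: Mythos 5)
Your proof is correct, and the mathematical heart of it --- the step $(c)\Rightarrow(b)$, where indecomposability forces the witnessing summand $D$ to be $0$ or $M$ and the case $D=M$ is killed because $X\beta^*M$ implies $M/X\ll M/X$, hence $X=M$ --- is exactly the paper's argument, except that the paper cites [B\i rkenmeier et al., Corollary $2.8(iii)$] for that last fact where you verify it directly from the definition of $\beta^*$. Where you genuinely diverge is in the overall architecture: the paper proves $(a)\Leftrightarrow(b)$ by an appeal to an external result ([Acar--Harmanci, Lemma $14$]) and then handles $(b)\Leftrightarrow(c)$ separately, whereas you run the cycle $(a)\Rightarrow(c)\Rightarrow(b)\Rightarrow(a)$, importing $(a)\Rightarrow(c)$ from Theorem \ref{thm3} and supplying a short direct proof of $(b)\Rightarrow(a)$ (taking the decomposition $M=0\oplus M$ for a proper cyclic $X$, which is small by hypothesis, and $M=M\oplus 0$ when $X=M$). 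Your version is more self-contained --- it needs no outside lemma beyond the elementary $\beta^*$ facts already recorded in Section 2 --- and it makes visible that $(b)\Rightarrow(a)$ and $(a)\Rightarrow(c)$ hold without indecomposability, which the paper's citation obscures; the paper's version is shorter on the page but leans on two external references. One stylistic caution: you silently use that $X\beta^*0$ is equivalent to $X\ll M$ in both directions; this is exactly the remark the paper records after Lemma 2.2, so it is available, but it is worth citing explicitly.
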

\begin{proof}
$(a)\Leftrightarrow(b)$ It is easy to see from [\cite{Harmancý}, Lemma $14$].\\
$(b)\Rightarrow(c)$ Let $M$ be principally hollow and $m\in M$. Then $mR \ll M$ implies that $mR\beta$*$0$.\\
$(c) \Rightarrow (b)$ Let $mR$ be a proper cyclic submodule of $M$. By $(c)$, there exists a decomposition $M=D\oplus D'$ such that $mR \beta$*$D$. Since $M$ is indecomposable, $D=M$ or $D=0$. Let $D=M$. From [\cite{Býrkenmeýer}, Corollary $2.8.(iii)$], we obtain $mR=M$ but this is a contradiction. Thus  $mR\beta$*$0$ and so $mR\ll M$. That is, $M$ is principally hollow.
\end{proof}
We shall give the following example of modules which are principally supplemented but not principally $\mathcal{G}$*-lifting.
\begin{example}
Let $F$ be a field, $x$ and $y$ commuting indeterminates over $F$. Let $R= F[x,y]$ be a polynomial ring and its ideals $I_{1}=(x^{2})$ and $I_{2}=(y^{2})$ and the ring $S=R/(x^{2},y^{2})$. Consider the $S$-module $M=\overline{x}S+\overline{y}S$. By [\cite{Harmancý}, Example $15$], $M$ is an indecomposable $S$-module and it is not principally hollow. Then $M$ is not principally $\mathcal{G}$*-lifting from Theorem \ref{ind}. Again by [\cite{Harmancý}, Example $15$], $M$ is principally supplemented.
\end{example}

A module $M$ is said to be \emph{principally semisimple} if every cyclic submodule of $M$ is a direct summand of $M$.

\begin{lemma}
Every principally semisimple module is principally $\mathcal{G}$*-lifting.
\end{lemma}
\begin{proof}
It is clear from the definition of semisimple modules and the reflexive property of $\beta$*.
\end{proof}

Recall that a submodule $N$ is called \emph{fully invariant} if for each endomorphism $f$ of $M$, $f(N)\leq N$. A module $M$ is said to be \emph{duo module} if every submodule of $M$ is fully invariant. A module $M$ is called \emph{distributive} if for all submodules $A,B,C$ of $M$, $A+(B\cap C)=(A+B)\cap (A+C)$ or $A\cap (B+C)=(A\cap B)+(A\cap C)$.\\

\begin{proposition}
Let $M = M_{1}\oplus M_{2}$ be a duo module (or distributive module). Then $M$ is principally $\mathcal{G}$*-lifting if and only if $M_{1}$ and $M_{2}$ are principally $\mathcal{G}$*-lifting.
\end{proposition}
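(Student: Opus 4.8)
The plan is to prove each direction separately, using the duo (or distributive) hypothesis to control how cyclic submodules of $M$ decompose relative to $M_1$ and $M_2$. The key structural fact is that in a duo module (and likewise in a distributive module) every submodule $N$ of $M = M_1 \oplus M_2$ splits as $N = (N \cap M_1) \oplus (N \cap M_2)$; in particular this applies when $N = mR$ is cyclic, so $mR = (mR \cap M_1) \oplus (mR \cap M_2)$, and each summand $mR \cap M_i$ is itself a cyclic submodule of $M_i$ (being a direct summand of a cyclic module, hence a homomorphic image of $mR$). I would record this splitting as the first step, citing the standard fact for duo/distributive modules.

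For the "if" direction, suppose $M_1$ and $M_2$ are principally $\mathcal{G}$*-lifting and let $mR \leq M$ be a proper cyclic submodule. Write $mR = X_1 \oplus X_2$ with $X_i = mR \cap M_i$ cyclic in $M_i$. Apply the hypothesis in each $M_i$ to get a direct summand $D_i$ of $M_i$ with $X_i \,\beta^* D_i$ in $M_i$; then $D_1 \oplus D_2$ is a direct summand of $M$. It remains to show $mR \,\beta^* (D_1 \oplus D_2)$ in $M$. By Lemma \ref{lemma1}, $X_i \,\beta^* D_i$ in $M_i$ gives $X_i \,\beta^* D_i$ in $M$; the main remaining point is that $\beta^*$ is compatible with direct sums, i.e. if $X_1 \,\beta^* D_1$ and $X_2 \,\beta^* D_2$ with $X_i, D_i \leq M_i$, then $X_1 \oplus X_2 \,\beta^* D_1 \oplus D_2$. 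I would verify this via the smallness characterization in Definition \ref{rel}: $\frac{(X_1 \oplus X_2) + (D_1 \oplus D_2)}{X_1 \oplus X_2} \cong \frac{X_1 + D_1}{X_1} \oplus \frac{X_2 + D_2}{X_2}$, and a direct sum of small submodules of a direct sum is small, so this quotient is small in $\frac{M}{X_1 \oplus X_2} \cong \frac{M_1}{X_1} \oplus \frac{M_2}{X_2}$; symmetrically for the other condition. Hence $mR \,\beta^*(D_1\oplus D_2)$ and $M$ is principally $\mathcal{G}$*-lifting.

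For the "only if" direction, suppose $M$ is principally $\mathcal{G}$*-lifting and let $X$ be a proper cyclic submodule of $M_1$ (the argument for $M_2$ is symmetric). Then $X$ is cyclic in $M$, so there is a direct summand $D$ of $M$ with $X \,\beta^* D$ in $M$. By Lemma \ref{lemma2}, $D$ is cyclic. Here the duo/distributive hypothesis re-enters: $D$ splits as $D = (D \cap M_1) \oplus (D \cap M_2)$, and one shows $D \cap M_1$ is a direct summand of $M_1$ (a summand of $M$ contained in $M_1$, intersected appropriately — or more cleanly, $D \cap M_1$ is a direct summand of $D$ hence of $M$, and lies in $M_1$, so it is a direct summand of $M_1$). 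The crux is then to show $X \,\beta^* (D \cap M_1)$ in $M_1$. Since $X \leq M_1$ and $M_1$ is fully invariant (or using distributivity), projecting the relation $X \,\beta^* D$ onto $M_1$ along $M_2$ should yield $X \,\beta^* (D\cap M_1)$; concretely, applying the projection $\pi_1 : M \to M_1$ to $X + D = $ (appropriate sums) and using that $\pi_1$ preserves smallness of quotients one gets the two required smallness conditions inside $M_1$, and then Lemma \ref{lemma1} transfers the relation.

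The main obstacle I anticipate is the "only if" direction: establishing that the direct summand $D$ witnessing $X \,\beta^* D$ in $M$ can be replaced by one contained in $M_1$, and that the $\beta^*$ relation is inherited under the projection $\pi_1$. The duo/distributive hypothesis is exactly what is needed to force $D = (D \cap M_1) \oplus (D \cap M_2)$ and to make $\pi_1(D) = D \cap M_1$ behave well; I would isolate as a small lemma the statement "if $N$ is fully invariant (resp. the module is distributive) and $A \,\beta^* B$ in $M$ with $A \leq N$, then $A \,\beta^* (B \cap N)$ in $N$," and the "if" direction's sum-compatibility of $\beta^*$ as another small lemma, after which both implications are short.
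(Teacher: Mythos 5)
Your proposal is correct and follows essentially the same route as the paper: both directions rest on the duo/distributive splitting $N=(N\cap M_1)\oplus(N\cap M_2)$ and on Lemma \ref{lemma1} to move the relation $\beta^*$ between $M$ and its summands, and your sum-compatibility lemma for the converse is exactly the paper's citation of [B\i rkenmeier et al., Proposition 2.11]. The only real divergence is in the forward direction, where you obtain $X\,\beta^*(D\cap M_1)$ by pushing the relation through the projection $M\to M/M_2\cong M_1$ (valid, and arguably cleaner), whereas the paper verifies the two smallness conditions by hand via the isomorphism $\frac{mR+(M_1\cap D)}{M_1\cap D}\cong mR\cap D'$ and the fact that $D'$ is a supplement of $mR$.
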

\begin{proof}
$(\Rightarrow)$ Let $m\in M_1$. Since $M$ is principally $\mathcal{G}$*-lifting, there is a decomposition $M=D\oplus D'$ such that $mR\beta$*$D$ in $M$. As $M$ is duo module $M_1=(M_1\cap D)\oplus (M_1\cap D')$, $mR=(mR\cap D)\oplus (mR\cap D')$ and \linebreak $D'=(M_1\cap D')\cap(M_2\cap D')$. We claim that $mR\beta$*$(M_1\cap D)$ in $M_1$. Since $\dfrac{mR+(M_1\cap D)}{mR}\leq\dfrac{mR+D}{mR}$ and $\dfrac{mR+D}{mR}\ll\dfrac{M}{mR}$, we have $\dfrac{mR+(M_1\cap D)}{mR}\ll\dfrac{M}{mR}$ by [\cite{Wisbauer}, $19.3(2)$]. From isomorphism theorem and the direct decomposition of $mR$
$$\dfrac{mR+(M_1\cap D)}{M_1\cap D}\cong\dfrac{mR}{mR\cap(M_1\cap D)}=\dfrac{mR}{mR\cap D}\cong mR\cap D'.$$ Since $D'$ is a supplement  of $mR$, $mR\cap D'\ll D'$. By [\cite{Wisbauer}, $19.3(5)$], \linebreak $mR\cap D'\ll M_1\cap D'$. Further $M_1\cap D'\cong\dfrac{M_1}{M_1\cap D}$. This shows that $\dfrac{mR+(M_1\cap D)}{M_1\cap D}$ is small in $\dfrac{M_1}{M_1\cap D}$ and also in $\dfrac{M}{M_1\cap D}$.  From Definition \ref{rel} we get $mR\beta$*$(M_1\cap D)$ in $M$. Then  $mR\beta$*$(M_1\cap D)$ in $M_1$ by Lemma \ref{lemma1}.

$(\Leftarrow)$ Let $m\in M$. If $M$ is a duo module, for cyclic submodule $mR$ of $M$, \linebreak $mR=(mR\cap M_{1})\oplus (mR\cap M_{2})$. So $mR\cap M_{1}=m_{1}R$ and $mR\cap M_{2}=m_{2}R$ for some $m_1\in M_1$, $m_2\in M_2$ . Since $M_{1}$ and $M_{2}$ are principally $\mathcal{G}$*-lifting, there are decompositions $M_1=D_1\oplus D_1'$ and $M_2=D_2\oplus D_2'$ such that $m_{1}R\beta$*$D_1$ in $M_1$ and $m_{2}R\beta$*$D_2$ in $M_2$. By Lemma \ref{lemma1}, $m_{1}R\beta$*$D_1$  and $m_{2}R\beta$*$D_2$ in $M$.  Since $mR=m_{1}R+m_{2}R$, by [\cite{Býrkenmeýer}, Proposition $2.11$], $mR\beta$*$(D_1\oplus D_2)$.\\
\end{proof}


\begin{proposition}
Let any cyclic submodule of $M$ have a supplement which is a relatively projective direct summand of $M$. Then $M$ is principally $\mathcal{G}$*-lifting.
\end{proposition}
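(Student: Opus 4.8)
The plan is to take an arbitrary element $m\in M$ and produce a direct summand of $M$ that is $\beta^*$-equivalent to $mR$. By hypothesis the cyclic submodule $mR$ has a supplement $D$ in $M$ such that $D$ is a direct summand of $M$ and $D$ is relatively projective (here I read the hypothesis as: $D$ and $D'$ are mutually projective, where $M=D\oplus D'$). Since $D$ is a supplement of $mR$ we have $M=mR+D$ and $mR\cap D\ll D$. The natural candidate for the summand $\beta^*$-equivalent to $mR$ is not $D$ itself but rather a summand contained in $mR$; so first I would use relative projectivity of the decomposition $M=D\oplus D'$ together with the equality $M=mR+D$ to extract, by the standard argument, a decomposition $M=A\oplus D$ with $A\leq mR$. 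Concretely, the epimorphism $D'\hookrightarrow M\twoheadrightarrow M/D\cong D'$ factors (up to the relevant projectivity) through $mR$, giving a submodule $A\leq mR$ with $M=A\oplus D$.

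Once I have $M=A\oplus D$ with $A\leq mR$, I claim $mR\,\beta^*\,A$. One inclusion is immediate: since $A\leq mR$, the submodule $(mR+A)/mR=0$ is trivially small in $M/mR$. For the other direction I must show $(mR+A)/A=mR/A$ is small in $M/A$. By modularity, from $A\leq mR$ and $M=A\oplus D$ we get $mR=A\oplus(mR\cap D)$, so $mR/A\cong mR\cap D$. Since $D$ is a supplement of $mR$ we have $mR\cap D\ll D$, hence $mR\cap D\ll M$ by \cite{Wisbauer}, and then $mR\cap D\ll D$ again yields, via the isomorphism $M/A\cong D$ carrying $mR/A$ onto $mR\cap D$, that $mR/A\ll M/A$. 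This is essentially the argument already used in the proof of Theorem \ref{thm3}, $(a)\Rightarrow(b)$, and it shows $mR\,\beta^*\,A$ by Definition \ref{rel}. Since $A$ is a direct summand of $M$, this proves $M$ is principally $\mathcal{G}$*-lifting.

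I expect the main obstacle to be the first step: correctly invoking relative projectivity to pass from "supplement which is a direct summand" to "direct summand sitting inside $mR$". The subtlety is which pair of modules must be mutually projective and in which direction the lifting goes; the cleanest route is to cite the standard lemma (as in \cite{Muller} or \cite{Wisbauer}) that if $M=X+Y$ with $Y$ a direct summand and the complementary summand $X$-projective in the appropriate sense, then there is a summand of $M$ inside $X$ complementing $Y$. Everything after that is the routine modularity-and-smallness bookkeeping already rehearsed in Theorem \ref{thm3}.
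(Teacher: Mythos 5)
Your proof is correct and takes essentially the same route as the paper's: both use the relative-projectivity lemma (Mohamed--M\"uller, Lemma 4.47) to replace the supplement direct summand by a complement $A\le mR$ of it inside $mR$. The only cosmetic difference is that you verify $mR\,\beta^*A$ by hand, whereas the paper observes at that point that $M$ is principally lifting and cites Theorem \ref{thm3}$(a)\Rightarrow(b)$ --- which is precisely the modularity-and-smallness computation you wrote out.
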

\begin{proof} Let $m\in M$. By hypothesis, there exsists a decomposition \linebreak $M=D\oplus D'$ such that $M=mR+D'$ and $mR\cap D'\ll D'$. Because $D$ is $D'$-projective,  $M=A\oplus D'$ for some submodule $A$ of $mR$ by  [\cite{Muller}, Lemma $4.47$]. So $M$ is principally lifting. It follows from Theorem \ref{thm3} that M is principally $\mathcal{G}$*-lifting.
\end{proof}

\begin{proposition}\label{quo}
Let $M$ be principally  $\mathcal{G}$*-lifting and $N$ be a submodule of $M$. If $\dfrac{N+D}{N}$ is a direct summand in $\dfrac{M}{N}$ for any cyclic direct summand $D$ of $M$, then $\dfrac{M}{N}$ is principally $\mathcal{G}$*-lifting.
\end{proposition}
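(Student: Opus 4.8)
The plan is to start from an arbitrary cyclic submodule of $M/N$ and trace it back to $M$. Every cyclic submodule of $M/N$ has the form $(mR+N)/N$ for some $m\in M$, so $mR$ is a cyclic submodule of $M$. Since $M$ is principally $\mathcal{G}$*-lifting, there is a direct summand $D$ of $M$ with $mR\,\beta^*\,D$. The first useful point is that Lemma \ref{lemma2} now forces $D$ to be cyclic, hence a \emph{cyclic} direct summand of $M$; the hypothesis on $N$ then guarantees that $(N+D)/N$ is a direct summand of $M/N$. This produces the candidate summand in $M/N$, and it remains only to check that it is $\beta^*$-equivalent to $(mR+N)/N$.

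The main step, therefore, is to show that $\beta^*$ descends along the canonical projection $M\to M/N$, i.e.\ that $mR\,\beta^*\,D$ in $M$ implies $(mR+N)/N\,\beta^*\,(D+N)/N$ in $M/N$. For the first smallness condition of Definition \ref{rel}, I would use the isomorphism theorems to identify
\[
\frac{(mR+N)/N+(D+N)/N}{(mR+N)/N}\;\cong\;\frac{mR+D+N}{mR+N},\qquad \frac{M/N}{(mR+N)/N}\;\cong\;\frac{M}{mR+N},
\]
and observe that $\dfrac{mR+D+N}{mR+N}$ is precisely the image of $\dfrac{mR+D}{mR}$ under the natural surjection $\dfrac{M}{mR}\twoheadrightarrow\dfrac{M}{mR+N}$. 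Since $mR\,\beta^*\,D$ gives $\dfrac{mR+D}{mR}\ll\dfrac{M}{mR}$, and a homomorphic image of a small submodule is small ([\cite{Wisbauer}, $19.3$]), it follows that $\dfrac{mR+D+N}{mR+N}\ll\dfrac{M}{mR+N}$. Interchanging the roles of $mR$ and $D$ yields the symmetric condition, so $(mR+N)/N\,\beta^*\,(D+N)/N$ in $M/N$.

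Putting the two parts together, every cyclic submodule $(mR+N)/N$ of $M/N$ is $\beta^*$-equivalent to the direct summand $(N+D)/N$ of $M/N$, which is exactly the assertion that $M/N$ is principally $\mathcal{G}$*-lifting. I expect the only place that needs care is the bookkeeping with the three nested quotients in the display above (and the clean invocation of Lemma \ref{lemma2} to legitimize applying the hypothesis on $N$); the rest is a routine application of the fact that small submodules are carried onto small submodules by surjections.
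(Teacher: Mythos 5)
Your proposal is correct and follows essentially the same route as the paper: write the cyclic submodule as $(mR+N)/N$, find a direct summand $D$ with $mR\,\beta^*\,D$, invoke Lemma \ref{lemma2} to see that $D$ is cyclic so the hypothesis on $N$ applies, and then transfer the $\beta^*$-relation to $M/N$. The only difference is that where the paper cites Proposition $2.9(i)$ of Birkenmeier et al.\ for the fact that the canonical epimorphism $\theta:M\to M/N$ preserves $\beta^*$, you verify that fact directly via the isomorphism theorems and the preservation of smallness under surjections, and your verification is sound.
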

\begin{proof}
Let $\dfrac{mR+N}{N}$ be a cyclic submodule of $\dfrac{M}{N}$ for $m\in M$. Since $M$ is principally $\mathcal{G}$*-lifting there exists a decomposition $M=D\oplus D'$ such that $mR\beta$*$D$. Then $D$ is also cyclic Lemma \ref{lemma2}. By hypothesis, $\dfrac{D+N}{N}$ is a direct summand in $\dfrac{M}{N}$. We claim that $\dfrac{mR+N}{N}\beta$*$\dfrac{D+N}{N}$. Consider the canonical epimorphism $\theta : M\rightarrow M/N$. By [\cite{Býrkenmeýer}, Proposition $2.9(i)$], $\theta (mR)\beta$*$\theta (D)$, that is, $\dfrac{mR+N}{N}\beta$*$\dfrac{D+N}{N}$. Thus $\dfrac{M}{N}$ is principally $\mathcal{G}$*-lifting.
\end{proof}

\begin{corollary}
Let $M$ be principally  $\mathcal{G}$*-lifting.
\begin{itemize}
\item[(a)] If $M$ is distributive (or duo) module, then any quotient module of $M$ principally  $\mathcal{G}$*-lifting.
\item[(b)] Let $N$ be a projection invariant, that is, $eN\subseteq N$ for all $e^2=e\in End(M)$. Then $\dfrac{M}{N}$ is principally  $\mathcal{G}$*-lifting. In particular, $\dfrac{M}{A}$ is principally  $\mathcal{G}$*-lifting for every fully invariant submodule $A$ of $M$.
\end{itemize}
\end{corollary}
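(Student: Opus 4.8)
The plan is to derive both parts of the corollary from Proposition~\ref{quo}, so the only real work is to verify, in each case, that the hypothesis ``$\dfrac{N+D}{N}$ is a direct summand of $\dfrac{M}{N}$ for every cyclic direct summand $D$ of $M$'' is automatically satisfied. For part (a), suppose $M$ is distributive and let $D$ be a direct summand of $M$, say $M=D\oplus D'$. Then $M/N=\dfrac{D+N}{N}+\dfrac{D'+N}{N}$, and distributivity of $M$ gives $(D+N)\cap(D'+N)=(D\cap D')+(D\cap N)+(N\cap D')+N=N$, since $D\cap D'=0$; hence the sum is direct and $\dfrac{D+N}{N}$ is a direct summand of $M/N$. (If instead one assumes $M$ is duo: every direct summand $D$ is fully invariant, so $N=(N\cap D)\oplus(N\cap D')$, and then $\dfrac{D+N}{N}\cong\dfrac{D}{N\cap D}$ sits inside the decomposition $M/N=\dfrac{D+N}{N}\oplus\dfrac{D'+N}{N}$.) In either case Proposition~\ref{quo} applies and every quotient $M/N$ is principally $\mathcal{G}$*-lifting.

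For part (b), the plan is the same: let $D$ be a cyclic direct summand of $M$ with $M=D\oplus D'$, and let $e\in\operatorname{End}(M)$ be the idempotent projection onto $D$ along $D'$. Since $N$ is projection invariant, $eN\subseteq N$ and $(1-e)N\subseteq N$, which forces $N=(N\cap D)\oplus(N\cap D')$. From this decomposition one checks that $(D+N)\cap(D'+N)=N$, so $M/N=\dfrac{D+N}{N}\oplus\dfrac{D'+N}{N}$ and in particular $\dfrac{D+N}{N}$ is a direct summand of $M/N$. Thus the hypothesis of Proposition~\ref{quo} holds and $M/N$ is principally $\mathcal{G}$*-lifting. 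The final ``in particular'' clause is then immediate, because a fully invariant submodule $A$ satisfies $f(A)\subseteq A$ for \emph{every} endomorphism $f$, hence in particular for every idempotent, so $A$ is projection invariant.

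The steps, in order, are: (1) fix a cyclic direct summand $D$ with complement $D'$; (2) use the structural hypothesis (distributivity, duo, or projection invariance) to show $N$ splits compatibly as $(N\cap D)\oplus(N\cap D')$, or directly that $(D+N)\cap(D'+N)=N$; (3) conclude $\dfrac{D+N}{N}$ is a direct summand of $M/N$; (4) invoke Proposition~\ref{quo}. I expect step (2) to be the only place requiring care: the computation $(D+N)\cap(D'+N)=N$ uses the modular law together with $D\cap D'=0$, and in the distributive case it is the distributive identity applied to $D$, $D'$, $N$ that does the job, while in the duo and projection-invariant cases one first establishes the splitting of $N$ and then the intersection identity follows. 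None of this is deep, but writing the intersection computation cleanly (being careful that $D\cap N\subseteq N$ and $D'\cap N\subseteq N$ so that the cross terms collapse into $N$) is the part most likely to trip up a hurried reader, so that is where I would spend the bulk of the exposition.
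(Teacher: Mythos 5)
Your proposal is correct and follows essentially the same route as the paper: both reduce the corollary to Proposition~\ref{quo} by verifying its hypothesis, using the distributive identity $(D+N)\cap(D'+N)=N$ in case (a) and the splitting $N=(N\cap D)\oplus(N\cap D')$ obtained from the idempotent projections in case (b). The only nitpick is in your duo parenthetical: the relevant fact is that $N$ itself is fully invariant (every submodule of a duo module is), not that the summand $D$ is; with that correction the argument is exactly the paper's.
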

\begin{proof}
$(a)$ Let $N$ be any submodule of $M$ and $M=D\oplus D'$ for some submodules $D,D'$ of $M$. Then
$$\dfrac{M}{N}=\dfrac{D\oplus D'}{N}=\dfrac{D+N}{N}+\dfrac{D'+N}{N}.$$
Since $M$ is distributive, $N=(D+N)\cap(D'+N)$. We obtain $\dfrac{M}{N}=\dfrac{D+N}{N}\oplus \dfrac{D'+N}{N}$. By Theorem \ref{quo}, $\dfrac{M}{N}$ is principally  $\mathcal{G}$*-lifting.\\

$(b)$ Assume that $M=D\oplus D'$ for some $D,D'\leq M$. For the projection map $\pi_D:M\rightarrow D$, $\pi_D^2=\pi\in End(M)$ and $\pi_D(N)\subseteq N$. So $\pi_D(N)=N\cap D$. Similarly, $\pi_{D'}(N)=N\cap D'$ for the projection map $\pi_{D'}:M\rightarrow D'$. Hence we have $N=(N\cap D)+(N\cap D')$. By modularity,
$$M=[D+(N\cap D)+(N\cap D')]+(D'+N)=[D\oplus(N\cap D')]+(D'+N).$$
and
$$[D\oplus(N\cap D')]\cap(D'+N)=[D\cap(D'+N)]+(N\cap D')=(N\cap D)+(N\cap D')=N$$
Thus $\dfrac{M}{N}=\dfrac{D\oplus(N\cap D')}{N}\oplus\dfrac{D'+N}{N}$. By Theorem \ref{quo}, $\dfrac{M}{N}$ is principally $\mathcal{G}$*-lifting.
\end{proof}

Another consequence of Proposition $3.10$ is given in the next result.

A module $M$ is said to have the \emph{summand sum property} (SSP) if the sum of any two direct summands of $M$ is again a direct summand.

\begin{proposition}
Let $M$ be a principally $\mathcal{G}$*-lifting module. If $M$ has SSP, then any direct summand of $M$ is principally $\mathcal{G}$*-lifting.
\end{proposition}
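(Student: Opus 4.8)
The plan is to realize the given direct summand $N$ as a quotient of $M$ and then apply Proposition~\ref{quo}. Write $M = N \oplus N'$, and recall that the restriction to $N$ of the canonical projection $M \to M/N'$ is an isomorphism $N \cong M/N'$. Since being principally $\mathcal{G}$*-lifting is plainly preserved under isomorphism (a cyclic submodule, a direct summand, and the relation $\beta^*$ all transport along an isomorphism), it suffices to show that $M/N'$ is principally $\mathcal{G}$*-lifting. By Proposition~\ref{quo}, applied with the submodule $N'$ in the role of "$N$", this reduces to a single verification: for every cyclic direct summand $D$ of $M$, the submodule $(D+N')/N'$ is a direct summand of $M/N'$.

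So fix a cyclic direct summand $D$ of $M$. Both $D$ and $N'$ are direct summands of $M$, so the SSP hypothesis gives that $D+N'$ is a direct summand of $M$; write $M = (D+N')\oplus E$. I would then pass to the quotient by $N'$ (which is legitimate since $N' \subseteq D+N'$): one has $M/N' = (D+N')/N' + (E+N')/N'$ because $(D+N')+E = M$, and a short modular-law computation gives
$(E+N') \cap (D+N') = N' + \bigl(E \cap (D+N')\bigr) = N' + 0 = N'$,
so the sum is in fact direct: $M/N' = (D+N')/N' \oplus (E+N')/N'$. Hence $(D+N')/N'$ is a direct summand of $M/N'$, the hypothesis of Proposition~\ref{quo} is satisfied, and we conclude that $M/N'$, and therefore $N$, is principally $\mathcal{G}$*-lifting.

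Essentially nothing here is a genuine obstacle; the only points requiring attention are the two elementary modularity steps (that $D+N'$ splits $M$ "through" $N'$, and that the induced sum in $M/N'$ is direct) and the observation that the property passes along isomorphisms. For completeness I note an alternative that sidesteps Proposition~\ref{quo}: given a cyclic $mR \subseteq N$, choose by hypothesis a direct summand $D$ of $M$ with $mR\,\beta^*\,D$, let $\pi : M \to N$ be the projection with kernel $N'$, and note $\pi(mR) = mR$ while $\pi(D) = N \cap (D+N')$; applying [\cite{B�rkenme�er}, Proposition $2.9(i)$] to $\pi$ yields $mR\,\beta^*\,\pi(D)$ in $N$, and the same SSP-plus-modularity argument shows $\pi(D) = N \cap (D+N')$ is a direct summand of $N$. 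In either approach the substantive input is simply SSP promoting $D+N'$ to a direct summand of $M$.
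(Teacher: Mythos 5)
Your proposal is correct and follows essentially the same route as the paper's own proof: use SSP to write $M=(D+N')\oplus E$, pass to $M/N'$, verify directness of the induced sum by the modular law, and invoke Proposition~\ref{quo} together with $N\cong M/N'$. The only difference is that you make explicit the transfer of the property along the isomorphism $N\cong M/N'$, which the paper leaves implicit.
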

\begin{proof}
Let $M=N\oplus N'$ for some submodule $N,N'$ of $M$. Take any cyclic direct summand $D$ of $M$. Since $M$ has SSP, $M=(D+N')\oplus T$ for some submodule $T$ of $M$. Then $$\dfrac{M}{N'}=\dfrac{D+N'}{N'}+\dfrac{T+N'}{N'}$$  By modularity, $$(D+N')\cap(T+N') = N'+[(D+N')\cap T]= N'.$$ So we obtain $$\dfrac{M}{N'}=\dfrac{D+N'}{N'}\oplus\dfrac{T+N'}{N'}.$$ Thus $N$ is principally $\mathcal{G}$*-lifting from Proposition \ref{quo}.
\end{proof}


Next we show when $M/ Rad(M)$ is principally semisimple in case $M$ is principally $\mathcal{G}$*-lifting module.
\begin{proposition}
Let $M$ be principally $\mathcal{G}$*-lifting and distributive module. Then $\dfrac{M}{Rad(M)}$ is a principally semisimple.
\end{proposition}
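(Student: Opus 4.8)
The plan is to pass to the module $\overline{M}:=M/\Rad(M)$ and to exploit that its radical is zero. First I would note that, since $M$ is distributive and principally $\mathcal{G}$*-lifting, part $(a)$ of the Corollary following Proposition~\ref{quo} tells us that $\overline{M}$, being a quotient of $M$, is again principally $\mathcal{G}$*-lifting. Second, $\Rad(\overline{M})=\Rad\!\big(M/\Rad(M)\big)=0$ by the standard behaviour of the radical (see \cite{Wisbauer}). So it suffices to show that a radical-free principally $\mathcal{G}$*-lifting module is principally semisimple.

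Now I would take an arbitrary cyclic submodule $\overline{X}=(mR+\Rad(M))/\Rad(M)$ of $\overline{M}$ and produce a direct complement for it. By the principally $\mathcal{G}$*-lifting property there is a decomposition $\overline{M}=\overline{D}\oplus\overline{D}'$ with $\overline{X}\beta^*\overline{D}$. Since $\overline{D}'$ is a direct complement of $\overline{D}$, it is in particular a supplement of $\overline{D}$ in $\overline{M}$, so by [\cite{B�rkenme�er}, Theorem $2.6(ii)$] (used in exactly this way in the proof of Theorem~\ref{thm3}) $\overline{D}'$ is also a supplement of $\overline{X}$ in $\overline{M}$; that is, $\overline{X}+\overline{D}'=\overline{M}$ and $\overline{X}\cap\overline{D}'\ll\overline{D}'$. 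Because $\overline{D}'$ is a direct summand of $\overline{M}$ and the radical distributes over direct sums, $\Rad(\overline{D}')\subseteq\Rad(\overline{M})=0$; and since every small submodule of a module lies in its radical, $\overline{X}\cap\overline{D}'\subseteq\Rad(\overline{D}')=0$. Hence $\overline{X}\cap\overline{D}'=0$, so $\overline{M}=\overline{X}\oplus\overline{D}'$ and $\overline{X}$ is a direct summand of $\overline{M}$. As $\overline{X}$ was an arbitrary cyclic submodule, $M/\Rad(M)$ is principally semisimple.

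In essence the statement is the implication ``$\Rad(N)=0$ and $N$ principally $\mathcal{G}$*-lifting $\Rightarrow$ $N$ is principally semisimple'', whose mechanism is simply that a supplement inside a radical-free module must be a direct complement. The hypotheses enter in a modest but real way: distributivity is needed only to guarantee, via the Corollary following Proposition~\ref{quo}, that the principally $\mathcal{G}$*-lifting property survives the passage to $M/\Rad(M)$ (otherwise one cannot even apply the definition there), and the single point requiring a little care is the bookkeeping that the small submodule $\overline{X}\cap\overline{D}'$ of the summand $\overline{D}'$ is forced to vanish once $\Rad(\overline{M})=0$. I do not anticipate a serious obstacle beyond getting these two points straight.
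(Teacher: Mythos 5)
Your proof is correct, but it takes a genuinely different route from the paper's. The paper argues entirely inside $M$: given $m\in M$ it chooses $M=D\oplus D'$ with $mR\,\beta^*\,D$, notes via [Birkenmeier et al., Theorem 2.6(ii)] that $D'$ is a supplement of $mR$, and then applies the distributive law directly to compute $(mR+Rad(M))\cap(D'+Rad(M))=(mR\cap D')+Rad(M)=Rad(M)$ (the last equality because $mR\cap D'\ll D'$ forces $mR\cap D'\subseteq Rad(M)$), concluding that the image of $mR$ is a direct summand of $M/Rad(M)$. You instead spend the distributivity hypothesis only once, to invoke part (a) of the corollary to Proposition \ref{quo} and transfer the principally $\mathcal{G}$*-lifting property down to $M/Rad(M)$, and then prove the clean general fact that a principally $\mathcal{G}$*-lifting module with zero radical is principally semisimple. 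The underlying mechanism is the same in both arguments --- a small intersection must land in the radical, which has been killed --- but your factorisation isolates a reusable lemma (``radical-free plus principally $\mathcal{G}$*-lifting implies principally semisimple'') and replaces the explicit modular/distributive computation with the already-established closure of the class under quotients of distributive modules, whereas the paper's version is more self-contained and does not need that corollary. All the individual steps you use ($Rad(M/Rad(M))=0$, the supplement transfer along $\beta^*$, and $Rad(N)\subseteq Rad(M)$ for a summand $N$) are standard and correctly applied.
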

\begin{proof}
Let $m\in M$. There exists a decomposition $M=D\oplus D'$ such that $mR\beta$*$D$. By [\cite{Býrkenmeýer}, Theorem $2.6 (ii)$], $D'$ is a supplement of $mR$, that is, $M=mR+D'$ and $mR\cap D'\ll D'$. Then
$$\dfrac{M}{Rad(M)}=\dfrac{mR+Rad(M)}{Rad(M)} + \dfrac{D'+Rad(M)}{Rad(M)}.$$
Because $M$ is distributive and $mR\cap D'\ll D'$, $$(mR+Rad(M)) \cap (D'+ Rad(M))= (mR \cap D') + Rad(M)= Rad(M).$$  Hence $\dfrac{mR+Rad(M)}{Rad(M)}$ is a direct summand in $\dfrac{M}{Rad(M)}$, this means that $\dfrac{M}{Rad(M)}$ is a principally semisimple module.
\end{proof}

\begin{proposition}\label{pro1}
Let $M$ be a principally $\mathcal{G}$*-lifting module and \linebreak $Rad(M)\ll M$. Then $\dfrac{M}{Rad(M)}$ is principally semisimple.
\end{proposition}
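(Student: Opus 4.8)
The plan is to reduce this to the distributive case already handled in the previous proposition — but without the distributivity hypothesis — by exploiting the extra assumption $\Rad(M)\ll M$. First I would take $m\in M$ and use the principally $\mathcal{G}$*-lifting hypothesis to get a decomposition $M=D\oplus D'$ with $mR\,\beta^*D$. By Theorem \ref{thm1} (or [\cite{B�rkenme�er}, Theorem $2.6(ii)$]), $D'$ is a supplement of $mR$ in $M$, so $M=mR+D'$ and $mR\cap D'\ll D'$. Passing to $\overline{M}:=M/\Rad(M)$ and writing bars for images, I get $\overline{M}=\overline{mR}+\overline{D'}$, and I want to show $\overline{mR}$ is a direct summand of $\overline{M}$.

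The key point is to show $\overline{mR}\cap\overline{D'}=\overline 0$, i.e. $(mR+\Rad(M))\cap(D'+\Rad(M))=\Rad(M)$. I would argue as follows: since $M=D\oplus D'$, modularity gives $mR=(mR\cap D)\oplus(mR\cap D')$ because $D\le mR$ — wait, that is not given; instead I should use that $\beta^*$ with a summand behaves well. Actually the cleanest route is: $D\,\beta^*mR$ means $D\overset{cs}\hookrightarrow mR+D$, so $(mR+D)/D\ll M/D$; since $M/D\cong D'$ this says the image of $mR$ in $D'$ is small in $D'$, hence small in $M$ as $D'\ll$-position behaves well, and thus $mR\subseteq D+(\text{small submodule of }M)\subseteq D+\Rad(M)$ using $\Rad(M)\ll M$ to absorb the small piece into the radical (every small submodule of $M$ lies in $\Rad(M)$ when... no — small submodules always lie in $\Rad(M)$ by [\cite{Wisbauer}, $21.5$], no hypothesis needed). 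So $mR+\Rad(M)=D+\Rad(M)$ — wait, I only get $mR\subseteq D+\Rad(M)$ after noting the small submodule of $D'$ lies in $\Rad(D')\subseteq\Rad(M)$. Hence $mR+\Rad(M)\subseteq D+\Rad(M)$. Then $(mR+\Rad(M))\cap(D'+\Rad(M))\subseteq (D+\Rad(M))\cap(D'+\Rad(M))$, and the latter equals $\Rad(M)$: indeed if $d+r=d'+r'$ with $d\in D,d'\in D',r,r'\in\Rad(M)$, then $d-d'=r'-r\in\Rad(M)\subseteq D\oplus D'$ forces (by uniqueness of the decomposition and $\Rad(M)=(\Rad(M)\cap D)\oplus(\Rad(M)\cap D')$, which holds since $D,D'$ are summands) $d\in\Rad(M)\cap D$ and $d'\in\Rad(M)\cap D'$, so $d+r\in\Rad(M)$. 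Therefore $\overline{M}=\overline{mR}\oplus\overline{D'}$, which shows $\overline{mR}$ is a direct summand of $\overline{M}$.

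Since $m$ was arbitrary, every cyclic submodule of $M/\Rad(M)$ — these are exactly the submodules of the form $\overline{mR}=(mR+\Rad(M))/\Rad(M)$ — is a direct summand, so $M/\Rad(M)$ is principally semisimple. I expect the main obstacle to be the step that extracts $mR\subseteq D+\Rad(M)$ from $mR\,\beta^*D$ cleanly: one must correctly identify the image of $mR$ under the projection onto $D'$ as a small submodule of $D'$ using $(mR+D)/D\ll M/D$ and the isomorphism $M/D\cong D'$, then invoke [\cite{Wisbauer}, $21.5$] (or $19.3$) to land it inside $\Rad(D')\subseteq\Rad(M)$; the hypothesis $\Rad(M)\ll M$ is what then makes $\Rad(M)$ large enough to play the role that the distributive law played in the previous proof, letting the intersection collapse.
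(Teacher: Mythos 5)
Your argument is correct, but it takes a genuinely different route from the paper's. The paper works with the full preimage $X=mR+\Rad(M)$ of the cyclic submodule of $M/\Rad(M)$: it invokes Corollary 2.12 of the Birkenmeier et al.\ paper to upgrade $mR\,\beta^*D$ to $(mR+\Rad(M))\,\beta^*D$ --- this is the one step where $\Rad(M)\ll M$ is genuinely used --- then concludes that $D'$ is a supplement of $X$, so $X\cap D'\ll D'$ and hence $X\cap D'\subseteq\Rad(M)$, and finishes by modularity. You instead stay with $mR$ itself: you extract $mR\subseteq D+\Rad(M)$ from $(mR+D)/D\ll M/D\cong D'$ (the projection of $mR$ into $D'$ is small in $D'$, hence lies in $\Rad(D')\subseteq\Rad(M)$), and then verify by a direct element computation, using $\Rad(M)=(\Rad(M)\cap D)\oplus(\Rad(M)\cap D')$, that $(D+\Rad(M))\cap(D'+\Rad(M))=\Rad(M)$. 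Your route is more elementary (no appeal to the external Corollary 2.12) and, as your own parenthetical self-corrections already hint, it never actually uses $\Rad(M)\ll M$: small submodules always lie in the radical, and the intersection collapse is pure direct-sum bookkeeping. So your argument in fact proves the conclusion for an arbitrary principally $\mathcal{G}$*-lifting module, which strengthens both this proposition and the preceding one (where distributivity was assumed for the same conclusion). The only blemish is your closing sentence, which attributes to the hypothesis $\Rad(M)\ll M$ a role it does not play anywhere in your own proof.
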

\begin{proof}
Let $\dfrac{X}{Rad(M)}$ be a cyclic submodule of $\dfrac{M}{Rad(M)}$. Then $X=mR+Rad(M)$ for some $m\in M$. There exists a decomposition $M=D\oplus D'$ such that $mR\beta$*$D$. By [\cite{Býrkenmeýer}, Theorem $2.6 (ii)$], $D'$ is a supplement of $mR$ in $M$. It follows from [\cite{Býrkenmeýer}, Corollary $2.12$] that $(mR+Rad(M))\beta$*$D$. Therefore $$\dfrac{M}{Rad(M)}=\dfrac{X}{Rad(M)}+\dfrac{D'+Rad(M)}{Rad(M)}.$$ By modularity and $X\cap D'\subseteq Rad(M)$, $$\dfrac{X}{Rad(M)} \cap \dfrac{D'+Rad(M)}{Rad(M)}=\dfrac{(X\cap D')+Rad(M)}{Rad(M)}.$$ Then we obtain that  $\dfrac{M}{Rad(M)}=\dfrac{X}{Rad(M)}\oplus\dfrac{D'+Rad(M)}{Rad(M)}$.\\
\end{proof}

\begin{proposition}
Let $M$ be principally $\mathcal{G}$*-lifting. If $Rad(M)\ll M$, then $M=A\oplus B$ where $A$ is principally semisimple and $Rad(B) \ll B$.
\end{proposition}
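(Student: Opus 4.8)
The plan is to split off from $M$ a direct summand $A$ with $Rad(A)=0$ which is as large as possible, write $M=A\oplus B$, and check the two requirements. The requirement on $B$ comes for free: for any direct summand decomposition $M=A\oplus B$ one has $Rad(M)=Rad(A)\oplus Rad(B)$, so $Rad(B)\subseteq Rad(M)\ll M$, and since $Rad(B)$ lies inside the direct summand $B$ this forces $Rad(B)\ll B$ by [Wisbauer,~$19.3$]. So the whole content is to produce a (large) direct summand $A$ which turns out to be principally semisimple. The decisive input is Proposition~\ref{pro1}: since $Rad(M)\ll M$, the module $\overline{M}:=M/Rad(M)$ is principally semisimple, and $Rad(\overline{M})=0$.

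The technical heart is the claim that every direct summand $A$ of $M$ with $Rad(A)=0$ is automatically principally semisimple. For such an $A$ one has $A\cap Rad(M)=Rad(A)=0$ (using the direct decomposition of $Rad(M)$ above). Fix $0\neq a\in A$. Since $M$ is principally $\mathcal{G}$*-lifting there is a decomposition $M=E\oplus E'$ with $aR\,\beta^{*}\,E$, and $E$ is cyclic by Lemma~\ref{lemma2}. Applying the canonical epimorphism $\theta\colon M\to\overline{M}$, exactly as in the proof of Proposition~\ref{quo} (via [Birkenmeier et al., Proposition~$2.9(i)$]), yields $\overline{aR}\,\beta^{*}\,\overline{E}$ in $\overline{M}$, where $\overline{X}=\theta(X)$. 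Both $\overline{aR}$ and $\overline{E}$ are cyclic submodules of the principally semisimple module $\overline{M}$, hence direct summands of it; consequently $\overline{M}/\overline{aR}$ and $\overline{M}/\overline{E}$ are isomorphic to direct summands of $\overline{M}$ and so have zero radical. Since a zero-radical module has no nonzero small submodule, the two cosmall inclusions that constitute $\overline{aR}\,\beta^{*}\,\overline{E}$ are equalities; thus $\overline{aR}=\overline{E}$, i.e. $aR+Rad(M)=E+Rad(M)$.

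From here the claim is finished by a small computation. Write $Rad(M)=Rad(E)\oplus Rad(E')$ and let $\pi\colon M\to E$ be the projection along $E'$. From $E\subseteq aR+Rad(M)$ and $\pi(Rad(M))=Rad(E)$ one gets $E=\pi(aR)+Rad(E)$; since $Rad(E)=E\cap Rad(M)$ is small in $M$ and lies in the summand $E$, it is small in $E$, so $\pi(aR)=E$ and hence $aR+E'=M$. Also $aR\subseteq aR+Rad(M)=E+Rad(M)=E\oplus Rad(E')$, so by modularity $aR\cap E'\subseteq(E\oplus Rad(E'))\cap E'=Rad(E')\subseteq Rad(M)$; since $aR\subseteq A$, this gives $aR\cap E'\subseteq A\cap Rad(M)=0$. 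Therefore $M=aR\oplus E'$, so $aR$ is a direct summand of $M$, and hence of $A$ by modularity. As $a$ was arbitrary, $A$ is principally semisimple.

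It remains to choose $A$. I would apply Zorn's Lemma to the family of direct summands of $M$ with zero radical, ordered by inclusion; a maximal member $A$ then yields $M=A\oplus B$ with $A$ principally semisimple by the claim and $Rad(B)\ll B$ by the first paragraph. The step I expect to be the real obstacle is verifying the hypothesis of Zorn's Lemma: a union of a chain of direct summands need not be a direct summand, so one must check that a chain of zero-radical direct summands admits an upper bound that is again a direct summand, for instance by carrying along a chain of compatible complemented pairs $(A_{\lambda},B_{\lambda})$ with $A_{\lambda}\subseteq A_{\mu}$, $B_{\lambda}\supseteq B_{\mu}$ and verifying that $\bigcup_{\lambda}A_{\lambda}$ and $\bigcap_{\lambda}B_{\lambda}$ are still complementary, or by first observing that $\bigcup_{\lambda}A_{\lambda}$ is already principally semisimple and then splitting it off using $Rad(M)\ll M$. (Note that the first paragraph already gives the stated decomposition with the trivial choice $A=0$; the maximality argument is only needed to make $A$ substantial.)
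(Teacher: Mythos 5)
Your proposal is correct, and as you yourself observe, the literal statement is already settled by the trivial choice $A=0$, $B=M$: for any decomposition $M=A\oplus B$ one has $Rad(B)\subseteq Rad(M)\ll M$ and hence $Rad(B)\ll B$. The substantive content is your claim that a direct summand $A$ with $Rad(A)=0$ is principally semisimple, and there your route differs from the paper's: you pass to $\overline{M}=M/Rad(M)$, invoke Proposition~\ref{pro1} to conclude $\overline{aR}=\overline{E}$, and then descend back to $M$ via the projection $\pi$. The paper argues directly inside $M$: since $aR\,\beta^*E$ and $M=E\oplus E'$, the summand $E'$ is a supplement of $aR$ by [Birkenmeier et al., Theorem $2.6(ii)$], so $M=aR+E'$ and $aR\cap E'\ll E'$, whence $aR\cap E'\subseteq Rad(M)\cap A=0$ and $M=aR\oplus E'$ in one step. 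Your detour through the quotient is valid but can be compressed to exactly this, and it saves you the computation with $\pi(aR)$ and $E\oplus Rad(E')$.

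The gap you flag --- producing a \emph{maximal} zero-radical direct summand $A$ by Zorn's Lemma, where a union of a chain of direct summands need not be a direct summand --- is genuine, but it is worth knowing that the paper's own proof does not close it either. The paper starts from ``a submodule $A$ of $M$ such that $Rad(M)\oplus A$ is small in $M$'' (which, read literally, forces $A\subseteq Rad(M)$ and $A\cap Rad(M)=0$, hence $A=0$; it is presumably a misprint for $A$ being a complement of $Rad(M)$), proves that each $aR$ with $a\in A$ is a direct summand of $M$, and then stops: it never shows that $A$ itself is a direct summand of $M$, nor that the resulting complement $B$ has $Rad(B)\ll B$. So your write-up is, if anything, more honest about where the real difficulty of a non-trivial decomposition lies; to match the paper you need only replace your quotient argument by the supplement argument above, and accept that the choice of a suitable $A$ is left open in both treatments.
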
%
\begin{proof}
Let $A$ be a submodule of $M$ such that $Rad(M)\oplus A$ is small in $M$ and $m\in A$. By hypothesis, there exists a decomposition $M=D\oplus D'$ such that $mR\beta^*D$. Then $D$ is cyclic from Lemma \ref{lemma2}. By [\cite{Býrkenmeýer}, Theorem $2.6(ii)$], $M=mR+D'$ and $mR\cap D'\ll D'$. So $mR\cap D'=0$ because $mR\cap D'$ is a submodule of $Rad(M)$. That is, $M=mR\oplus D'$. Hence $mR=D$. Since $D\cap Rad(M)=0$, $D$ is isomorphic to a submodule of $\dfrac{M}{Rad(M)}$. By Proposition \ref{pro1}, $\dfrac{M}{Rad(M)}$ is principally semisimple. Thus $D$ is principally semisimple.
\end{proof}
In general, it is not true that principally lifting and principally $\mathcal{G}$*-lifting modules coincide. As we will see the following theorem, we need projectivity condition.
\begin{proposition}\label{pro}
Let $M$ be a $\pi$-projective module. The following are equivalent:
\begin{itemize}
  \item[(a)] $M$ is principally lifting,
  \item[(b)] $M$ is principally $\mathcal{G}$*-lifting,
  \item[(c)] $M$ is $\oplus$-principally supplemented.
\end{itemize}
\end{proposition}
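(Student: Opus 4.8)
The plan is to prove the cycle $(a)\Rightarrow(b)\Rightarrow(c)\Rightarrow(a)$, exploiting $\pi$-projectivity only at the step that genuinely needs it. The implication $(a)\Rightarrow(b)$ is already Theorem \ref{thm3} and requires no projectivity, so I would simply cite it. For $(b)\Rightarrow(c)$: given $m\in M$, principally $\mathcal G^*$-lifting yields a decomposition $M=D\oplus D'$ with $mR\,\beta^*D$; by [\cite{B�rkenme�er}, Theorem $2.6(ii)$] the complement $D'$ is a supplement of $mR$, and it is by construction a direct summand, so $M$ is $\oplus$-principally supplemented. (This is essentially the argument of $(b)\Rightarrow(c)$ in Theorem \ref{thm3}, strengthened to record that the supplement is a summand.)

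The substantive step is $(c)\Rightarrow(a)$, and this is where $\pi$-projectivity enters. Starting from a cyclic $mR\le M$, $\oplus$-principal supplementation gives $M=D\oplus D'$ with $M=mR+D'$ and $mR\cap D'\ll D'$. I want to "lift" this to a decomposition $M=A\oplus D'$ with $A\le mR$. The standard tool is the fact that under $\pi$-projectivity, whenever $M=U+V$ there is a submodule $U'\le U$ with $M=U'\oplus V$ (this is the classical consequence of $\pi$-projectivity — essentially [\cite{Muller}, Lemma $4.47$] or the analogous statement, which is already invoked in the earlier Proposition on relatively projective summands). Apply this to $M=mR+D'$ to get $A\le mR$ with $M=A\oplus D'$. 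It then remains to check the smallness condition: by modularity $mR=A\oplus(mR\cap D')$, and since $mR\cap D'\ll D'\ll M$ we get $mR\cap D'\ll M$; writing the complement of $A$ as $D'$ (with $M=A\oplus D'$), one has $mR\cap D'\ll M$ directly, which is exactly the principally lifting condition $M=A\oplus D'$, $A\le mR$, $mR\cap D'\ll M$. Hence $M$ is principally lifting.

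The main obstacle I anticipate is justifying the $\pi$-projective "internal complement" lemma in the precisely required form: $\pi$-projectivity of $M$ says that for any two submodules $U,V$ with $M=U+V$, the projections can be chosen so that the relevant homomorphism lifts, yielding $U'\le U$ with $M=U'\oplus V$. One must be careful that this applies with $U=mR$ cyclic and $V=D'$ a summand — but since $D'$ is a summand it is certainly a direct summand hence the decomposition $M=U'\oplus V$ goes through without needing $U$ to be a summand. I would phrase this as: by $\pi$-projectivity there is an idempotent $e\in\operatorname{End}(M)$ with $\operatorname{Im}(1-e)\le D'$ and $\operatorname{Im}(e)\le mR$ realizing $M=\operatorname{Im}(e)\oplus D'$, or simply cite the same lemma from \cite{Muller} used earlier. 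Everything else (modularity, $19.3$ of \cite{Wisbauer} for passing smallness between $D'$ and $M$) is routine. So the proof outline is: $(a)\Rightarrow(b)$ by Theorem \ref{thm3}; $(b)\Rightarrow(c)$ by the summand-supplement remark; $(c)\Rightarrow(a)$ by $\pi$-projectivity plus modularity.
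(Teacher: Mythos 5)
Your proposal is correct and follows essentially the same route as the paper: $(a)\Rightarrow(b)$ by Theorem \ref{thm3}, $(b)\Rightarrow(c)$ via [\cite{B�rkenme�er}, Theorem $2.6(ii)$], and $(c)\Rightarrow(a)$ by using $\pi$-projectivity to replace the summand-supplement $D'$ of $mR$ with a complement contained in $mR$ (the paper cites [\cite{Clark}, $4.14(1)$] for exactly this step). One small caution: the ``internal complement'' lemma as you first state it (for arbitrary $M=U+V$) is false in general --- it requires $V$ to be a direct summand --- but since you explicitly note that $D'$ is a summand before applying it, your argument is sound.
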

\begin{proof}
$(a)\Rightarrow (b)$ follows from Theorem \ref{thm3}.\\
$(b)\Rightarrow(c)$  follows from [\cite{Býrkenmeýer}, Theorem $2.6(ii)$].\\
$(c)\Rightarrow(a)$ Consider any $m\in M$. From $(c)$, $mR$ has a supplement $D$ which is a direct summand in $M$, that is, $M=mR+D$ and $mR\cap D\ll D$. Since $M$ is $\pi$-projective there exists a complement $D'$ of $D$ such that $D'\subseteq mR$ by [\cite{Clark}, $4.14(1)$]. Thus $M$ is principally lifting.
\end{proof}

\begin{proposition}
Let $M$ be a $\pi$-projective module. Then $M$ is principally $\mathcal{G}$*-lifting if and only if every cyclic submodule $X$ of $M$ can be written as $X=D\oplus A$ such that $D$ is a direct summand in $M$ and $A\ll M$.
\end{proposition}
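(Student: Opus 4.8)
The plan is to treat the two implications separately and to invoke Proposition~\ref{pro} to feed in the $\pi$-projectivity precisely where it is needed. For the forward direction, I would assume $M$ is $\pi$-projective and principally $\mathcal{G}$*-lifting, so that Proposition~\ref{pro} makes $M$ principally lifting. Then given a cyclic submodule $X=mR$, principal lifting supplies a decomposition $M=D\oplus D'$ with $D\subseteq X$ and $X\cap D'\ll M$; since $D\subseteq X$, the modular law gives $X=D\oplus(X\cap D')$, so putting $A=X\cap D'$ yields the desired decomposition $X=D\oplus A$ with $D$ a direct summand of $M$ and $A\ll M$.

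For the converse, I would assume every cyclic submodule $X$ of $M$ is of the form $X=D\oplus A$ with $D$ a direct summand of $M$ and $A\ll M$; note that $\pi$-projectivity is not actually needed here. Fixing a cyclic submodule $X=mR$ and such a decomposition, I would verify $X\beta^*D$ directly from Definition~\ref{rel}. Since $D\subseteq X$ we have $X+D=X$, hence $(X+D)/X=0$ is small in $M/X$. For the remaining condition, $(X+D)/D=X/D=(A+D)/D$ is the image of $A$ under the canonical epimorphism $M\to M/D$, and since $A\ll M$ this image is small in $M/D$ by [\cite{Wisbauer}, $19.3$], so $(X+D)/D\ll M/D$. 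Hence $X\beta^*D$ with $D$ a direct summand of $M$, so $M$ is principally $\mathcal{G}$*-lifting.

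The argument is largely bookkeeping around the modular law and the behaviour of small submodules under projections, and I do not anticipate a genuine obstacle. The single substantive ingredient is Proposition~\ref{pro}: it is what converts the principally $\mathcal{G}$*-lifting hypothesis on a $\pi$-projective module into the principally lifting property, and thereby produces a direct summand lying \emph{inside} $X$ rather than one merely $\beta^*$-equivalent to $X$; everything else in the forward direction is then modularity. In the converse, the only step worth double-checking is the identification of $(X+D)/D$ with the image of $A$ under $M\to M/D$, after which smallness is immediate.
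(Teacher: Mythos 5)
Your forward direction is exactly the paper's argument: Proposition~\ref{pro} converts principally $\mathcal{G}$*-lifting into principally lifting, and the modular law then gives $X=D\oplus(X\cap D')$. Your converse, however, takes a genuinely different and more self-contained route. The paper argues indirectly: it cites [\cite{Kamal}, Lemma $2.10$] to conclude from the decomposition hypothesis that $M$ is principally lifting, and then applies Proposition~\ref{pro} again to land back at principally $\mathcal{G}$*-lifting. You instead verify $X\beta^*D$ directly from Definition~\ref{rel}: since $D\subseteq X$ the quotient $(X+D)/X$ is zero, and $(X+D)/D=(A+D)/D$ is the image of the small submodule $A$ under the canonical epimorphism $M\to M/D$, hence small in $M/D$. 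Both steps are correct, and your version has two advantages: it avoids the external citation to Kamal--Yousef, and it makes visible that $\pi$-projectivity plays no role in this implication (in the paper's route the $\pi$-projectivity enters only through the invocation of Proposition~\ref{pro}, and even there only the implication $(a)\Rightarrow(b)$ is used, which already holds for arbitrary modules by Theorem~\ref{thm3}). The paper's route buys a shorter write-up at the cost of an extra reference; yours isolates exactly where each hypothesis is used.
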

\begin{proof}
$(\Rightarrow)$ Let $M$ be principally $\mathcal{G}$*-lifting and $\pi$-projective module. By Proposition \ref{pro}, $M$ is principally lifting. Then for any cyclic submodule $X$ of  $M$ there exists a direct decomposition $M=D\oplus D'$ such that $D\leq X$ and $X \cap D'\ll M$. By modularity, we conclude that $X=D\oplus (X\cap D')$.
\newline $(\Leftarrow)$ By assumption and [\cite{Kamal}, Lemma $2.10$], $M$ is principally lifting. Hence from Proposition \ref{pro} $M$ is principally $\mathcal{G}$*-lifting.
\end{proof}

Now we mention that principally $\mathcal{G}$*-lifting and $\mathcal{G}$*-lifting modules are coincide under some condition.

\begin{proposition}
Let $M$ be Noetherian and have SSP. Then $M$ is principally $\mathcal{G}$*-lifting if and only if $M$ is $\mathcal{G}$*-lifting.
\end{proposition}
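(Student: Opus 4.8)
The forward implication is immediate and needs neither hypothesis: every $\mathcal{G}$*-lifting module is principally $\mathcal{G}$*-lifting, since a cyclic submodule is in particular a submodule, so the witnessing direct summand provided by $\mathcal{G}$*-liftingness already does the job. So the content is in the converse, and the plan is to reduce the arbitrary-submodule condition to the cyclic one by a finiteness argument and then glue the pieces back together.

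Assume $M$ is principally $\mathcal{G}$*-lifting, Noetherian, and has SSP, and let $N\leq M$ be arbitrary. First I would invoke Noetherianness: $N$ is finitely generated, say $N=m_1R+\cdots+m_nR$ with $m_i\in M$. For each $i$, principal $\mathcal{G}$*-liftingness yields a direct summand $D_i$ of $M$ with $m_iR\,\beta^*D_i$ (and, if one wants, each $D_i$ is cyclic by Lemma \ref{lemma2}, though this is not needed). Next I would assemble these into one summand: since $M$ has SSP, the sum of two direct summands is a direct summand, and an easy induction on $n$ then shows $D:=D_1+\cdots+D_n$ is a direct summand of $M$. Finally, applying the additivity of the $\beta^*$ relation — the statement from [Birkenmeier et al., Proposition 2.11] that $A_1\beta^*B_1$ and $A_2\beta^*B_2$ imply $(A_1+A_2)\beta^*(B_1+B_2)$ — repeatedly to the pairs $m_iR\,\beta^*D_i$ gives $N=\sum_{i=1}^n m_iR \;\beta^*\; \sum_{i=1}^n D_i=D$. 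Hence every submodule of $M$ is $\beta^*$ equivalent to a direct summand, i.e. $M$ is $\mathcal{G}$*-lifting.

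The argument is short, so there is no deep obstacle; the only thing to be careful about is that both hypotheses are used exactly where they must be. Noetherianness is precisely what allows the reduction from an arbitrary $N$ to a finite generating set, and SSP is precisely what guarantees that the finite sum $D_1+\cdots+D_n$ of direct summands is still a direct summand — for an infinite generating set neither step is available, which is why the theorem is stated for Noetherian modules with SSP. The one routine verification I would spell out is the induction that reduces the two-summand form of [Birkenmeier et al., Proposition 2.11] and of SSP to the $n$-fold versions used above.
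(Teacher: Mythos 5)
Your proposal is correct and follows essentially the same route as the paper's proof: decompose $N$ into finitely many cyclic generators via Noetherianness, find a $\beta^*$-equivalent direct summand for each, combine them using SSP, and conclude with the additivity of $\beta^*$ from [Birkenmeier et al., Proposition 2.11]. The only (harmless) difference is that you flag the routine induction extending the two-term statements to $n$ terms, which the paper leaves implicit.
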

\begin{proof}
$(\Leftarrow)$ Clear.\\
$(\Rightarrow)$ If $M$ is Noetherian, for any submodule $X$ of $M$ there exist some $m_{1},m_{2},...,m_{n}\in M$ such that $X=m_{1}R+m_{2}R+...+m_{n}R$ by [\cite{Wisbauer}, $27.1$].Since $M$ is principally $\mathcal{G}$*-lifting, there exist some direct summands $D_{1}, D_{2},...,D_{n}$ of $M$ such that $m_{1}R\beta$*$D_{1}$, $m_{2}R\beta$*$D_{2}$,...,$m_{n}R\beta$*$D_{n}$. $D=D_{1}+D_{2}+...+D_{n}$ is also a direct summand in $M$ because of SSP. By [\cite{Býrkenmeýer}, Proposition $2.11$], $X\beta$*$D$. Hence $M$ is $\mathcal{G}$*-lifting.
\end{proof}

\begin{proposition}
Let any submodule $N$ of $M$ be  the sum of a cyclic submodule $X$ and a small submodule $A$ in $M$. Then $M$ is principally $\mathcal{G}$*-lifting if and only if $M$ is $\mathcal{G}$*-lifting.
\end{proposition}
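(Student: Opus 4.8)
The plan is to dispatch the easy implication first and then reduce the general case to the cyclic one using the hypothesis. The direction ``$\mathcal{G}$*-lifting $\Rightarrow$ principally $\mathcal{G}$*-lifting'' is immediate, since the defining condition of a $\mathcal{G}$*-lifting module is quantified over \emph{all} submodules, in particular over the cyclic ones, so nothing needs to be proved. So assume $M$ is principally $\mathcal{G}$*-lifting and fix an arbitrary submodule $N\leq M$. By hypothesis we may write $N=X+A$ with $X$ a cyclic submodule of $M$ and $A\ll M$. Applying principal $\mathcal{G}$*-liftingness to the cyclic submodule $X$, we obtain a direct summand $D$ of $M$ with $X\beta^*D$; the goal is then to upgrade this to $N\beta^*D$.

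The crux is the claim $X\beta^*N$. Since $X\subseteq N$ we have $X+N=N$, so $\frac{X+N}{N}=0$ is trivially small in $\frac{M}{N}$. For the other half, note that $\frac{X+N}{X}=\frac{N}{X}=\frac{X+A}{X}$ is precisely the image of $A$ under the canonical epimorphism $M\to M/X$; since $A\ll M$, this image is small in $M/X$ by [\cite{Wisbauer}, $19.3$], i.e.\ $\frac{X+N}{X}\ll\frac{M}{X}$. By Definition \ref{rel} this gives $X\beta^*N$. (Alternatively, one may invoke [\cite{B�rkenme�er}, Corollary $2.12$] directly, which says precisely that a $\beta^*$-class is unchanged upon adding a small submodule.) Now, since $\beta^*$ is an equivalence relation, from $N\beta^*X$ and $X\beta^*D$ we conclude $N\beta^*D$ by transitivity.

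Finally, $N$ was an arbitrary submodule of $M$ and $D$ is a direct summand of $M$ with $N\beta^*D$, so $M$ is $\mathcal{G}$*-lifting, completing the proof. I do not expect a genuine obstacle here: the only points requiring care are the routine observation that epimorphic images of small submodules are small, and the use of transitivity of $\beta^*$ to chain $N\beta^*X\beta^*D$. The hypothesis is used in exactly one place --- to express the given submodule $N$ as ``cyclic plus small'' --- and that is precisely what makes the reduction from arbitrary submodules to cyclic ones go through.
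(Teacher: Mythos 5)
Your proof is correct and takes essentially the same route as the paper: the paper's $(\Rightarrow)$ direction likewise picks a direct summand $D$ with $X\beta^*D$ and then passes from $X$ to $N=X+A$ by citing Corollary $2.12$ of Birkenmeier et al., which is exactly the step you verify by hand (showing $X\beta^*N$ from the smallness of the image of $A$ in $M/X$, then using transitivity of $\beta^*$). The only difference is that you prove the content of that corollary inline rather than citing it --- an alternative you yourself point out --- so there is nothing to correct.
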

\begin{proof}
$(\Leftarrow)$ Clear.\\
$(\Rightarrow)$ Let $N=X+A$ for a cyclic submodule $X$ and a small submodule $A$ of $M$. Since $M$ is principally $\mathcal{G}$*-lifting, there exists a direct summand $D$ of $M$ such that $X\beta^*D$. From [\cite{Býrkenmeýer}, Corollary $2.12$], $(X+A)\beta^*D$, that is, $N\beta^*D$. Hence $M$ is $\mathcal{G}$*-lifting.
\end{proof}

\newpage

\end{document}